\documentclass{article} %[12pt,twoside,a4paper]
\usepackage{latexsym}
\usepackage{amssymb}
\usepackage{amsmath,amsthm}
\usepackage{mathrsfs}
\usepackage{hyperref}
\usepackage{graphicx}
\usepackage{verbatim}
\usepackage{pb-diagram,amsfonts,amsmath}

\usepackage[all]{xy}

\newtheorem{theorem}{Theorem}[section]
\newtheorem{exa
mple}{example}[section]
\newtheorem{lemma}[theorem]{Lemma}

\newtheorem{corollary}[theorem]{Corollary}
\newtheorem{proposition}[theorem]{Proposition}

\theoremstyle{definition}
\newtheorem{definition}{Definition}[section]

\theoremstyle{remark}
\newtheorem{remark}[theorem]{Remark}

\hyphenation{Thurs-ton}
%=======================DEFINE %PAGE====================================================================================
%\topmargin      -1cm              %top of page margin
%\headheight     14.5pt              %running head height
%\parskip        2mm                 %paragraph spacing
%\oddsidemargin  12mm%2mm%12mm                %left margin of odd page =28mm +oddsidemargin
%\evensidemargin 1mm%-9mm%1mm                 %left margin of even page =29mm +evensidemargin
%\textheight     230mm               %height of text box on page
%\textwidth      150mm%170mm%150mm               %width of text across the page =textwidth -5mm
%\parindent      8mm                 %paragraph indentation width

%\renewcommand\bibname{References}   %Changes the Bibliography name to References instead

\newcommand\N{\mathbb{N}}

\newcommand\Z{\mathbb{Z}}
\newcommand\R{\mathbb{R}}

\newcommand\T{\mathcal{T}}
\newcommand\M{\mathcal{M}}
\renewcommand\S{\mathcal{S}}

\newcommand\AT{\overline{\mathcal{T}}}
\newcommand\AM{\overline{\mathcal{M}}}
\renewcommand\P{\mathbb{P}}
\newcommand\PP{{{P}_f}}
\newcommand\Sphere{{\mathbb{S}^2}}

\renewcommand{\mod}{\mbox{\rm mod }}

\newcommand{\sigmat}{\tilde{\sigma}}
\newcommand{\fh}{\hat{f}}
\newcommand{\sm}{\setminus}
\newcommand{\eps}{\varepsilon}

{\par\noindent\textit{Proof of (#1)}%
} 

%\newcommand{\Cbar}{\overline{\C}}
%\newcommand\id{\text{id}}

%\spnewtheorem*{proofof}{Proof}{\bf}{\rm}

\def\Teich{Teich\-m\"uller }

\begin{document}
\large

%\pagenumbering{roman}   %Roman numeral numbering for initial section of thesis
%\begin{titlepage}

%\begin{figure}[tp]
%\centering
%\resizebox{3.5cm}{!}%{\includegraphics{UM_logo_MINBlue1.eps}}
%\end{figure}
%\title{}
%\date{}
%\maketitle
\title{Topological characterization of canonical Thurston obstructions}
\author{Nikita Selinger}
%\institute{Jacobs University Bremen, Campus Ring 1, 28759 Bremen, Germany\\\email{n.selinger@jacobs-university.de}}

\date{\today}
\maketitle

\begin{abstract}
Let $f$ be an obstructed Thurston map with canonical obstruction $\Gamma_f$. We prove the following generalization of Pilgrim's conjecture:  if the first-return map $F$ of a periodic component $C$ of the topological surface obtained from the sphere by pinching the curves of  $\Gamma_f$ is a Thurston map then the canonical obstruction of  $F$ is empty. Using this result, we give a complete topological characterization of canonical Thurston obstructions. 

\end{abstract}

\tableofcontents

%\pagenumbering{arabic}

\section{Introduction}

Thurston's characterization theorem for branched self-covers of the topological 2-shpere $\Sphere$ \cite{DH} gives a pure topological criterion whether a branched cover $f$ can be geometrized, i.e. if the topological sphere admits an invariant (in an appropriate sense) with respect to $f$  conformal structure. The theorem states that the latter is not possible only if there exists a Thurston obstruction for $f$ which is a collection of simple closed curves on $\Sphere$ subject to certain conditions.

Pilgrim refined the proof of Thurston's theorem \cite{P} by introducing a notion of canonical Thurston obstructions and showing that a branched cover $f$ with hyperbolic orbifold is obstructed if and only if the canonical obstruction  $\Gamma_f$ of $f$ is not empty. Cutting the initial branched cover $f$ into pieces along the curves of the canonical obstruction, Pilgrim deduces Canonical Decomposition Theorem \cite{P1} for branched covers. Canonical geometrization of an obstructed branched cover can now be constructed by geometrizing each piece in the canonical decomposition.

It is straightforward that any curve $\gamma \in \Gamma_f$ does not intersect any other curve of any other obstruction. The converse, however, turns out to be false (but almost true). It is also immediate to see that if a minimal obstruction has the leading eigenvalue strictly greater than one, then it is a subset of the canonical obstruction. These, however, were the only two known topological properties of canonical obstructions.
 In this article, we study further properties of canonical obstructions, using the techniques developed in \cite{S11}, and give their complete topological description  (see Theorem~\ref{thm:main}).

%Using the techniques developed in \cite{S11} we give a complete pure topological description of canonical Thurston obstructions (see Theorem~\ref{thm:main}).
In the recent work of Bonnot, Braverman, Yampolsky \cite{BBY}, the authors show that there exists an algorithm that can find geometrization of an unobstructed branched cover (or prove that the cover is obstructed). However, the question of algorithmically geometrizing arbitrary branched covers remains open. The first step was made in \cite{S11} where the author proved Pilgrim's conjecture from \cite{P1}. We generalize the statement of Pilgrim's conjecture for all Thurston maps (also with parabolic orbifolds) and give a new proof to this conjecture (see Theorem~\ref{thm:KevinConjecture2}). The theorem states that in a decomposition of a Thurston map along its canonical obstruction, the canonical obstruction of every periodic component is empty.

We work in the setting of Thurston maps that are defined as pairs of a postcritically finite branched cover $f$ and a forward invariant marked set $Q$, which contains the postcritical set $\PP$ of $f$. Maps of these type naturally arise in applications (see, for example, \cite{cheritat}). The importance of working in this generality comes from the fact that the class of Thurston maps together with homeomorphisms of spheres with marked points is closed under decompositions while if you decompose a classical postcritically finite branched cover you may easily get extra marked points. Classification of Thurston maps with hyperbolic orbifolds can be obtained by almost the same argument as the original Thurston's theorem. One can write a full list of postcritically finite maps with parabolic orbifolds (see \cite{DH}) up to Thurston equivalence with respect to the postcritical set. The classification of general Thurston maps with parabolic orbifolds is yet to be developed, however. The present article provides the first step in this direction.

As an example consider a flexible Latt\`es map $f$ which is the  quotient of a map $F(z)=4z$ defined on the plane by the group generated by $z\mapsto z+1, z\mapsto z+i, z\mapsto-z$. The point $\tilde{q}=(1+i)/3$ projects onto a fixed point $q$ of $f$. The pair $(f, Q=\PP\cup\{q\})$ is a rational Thurston map with parabolic orbifold. If we postcompose $f$ with an element of the mapping class group of $(\Sphere, Q)$, say a Dehn twist $T$ around a curve surrounding $q$ and one of the postcritical points, we get a new Thurston map $(T\circ f, Q)$ with parabolic orbifold. One may ask a lot of questions about this new map, such as whether or not it is equivalent to a rational map. In the particular case described above, if $(T\circ f,Q)$ is equivalent to a rational map, then $(T\circ f,Q)$ is equivalent to $(f,Q')$ with $Q'=\PP\cup \{q'\}$ where $q'$ is  a possibly different fixed point of $f$. If $(T\circ f,Q)$ is not equivalent to a rational map, then one expects that the canonical obstruction of must be non-empty. A thorough investigation of questions of this kind will be based on the results obtained in this article.

 We believe that the results of this article will also help in proving that both the  canonical decomposition of a branched cover and the geometrization thereof can be found algorithmically. This will lead, in particular, to an algorithm that can check equivalence of two Thurston maps.

{\bf Acknowledgments.}
This article is based on the results from the author's PhD thesis \cite{thesis}. I thank Dima Dudko, Adam Epstein, John Hubbard, Sarah Koch, Misha Lyubich, Daniel Meyer, Kevin Pilgrim, and Dierk Schleicher for useful conversations. I thank the referee for useful remarks and suggesting a shorter proof for Theorem~\ref{thm:2222obstruction}.
This research was partially supported by the Deutsche Forschungsgemeinschaft.

\section{Basic definitions}
\label{sec:basic}

The main setup is the same as in \cite{DH,S11}.

Let $f$ be  an orientation-preserving branched self-cover of degree $d_f \ge2$ of the 2-dimensional sphere $\Sphere$. The \emph{critial set} $\Omega_f$ is the set of all points $z$ in $\Sphere$ where the local degree of $f$ is greater than 1. The \emph{postcritial set} $\PP$ is the union of all forward orbits of $\Omega_f$, i.e. $\PP= \cup_{i\ge1} f^i(\Omega_f).$  A branched cover $f$ is called \emph{postcritially finite} if $\PP$ is finite. More generally, a pair $(f,Q_f)$ of a branched cover $f \colon \Sphere \to \Sphere$ and a finite set $Q_f \subset \Sphere$ is called a \emph{Thurston map} if $Q_f$ is forward invariant and contains all critical values of $f$ (and, hence, contains $\PP$). %Denote $q_f= \#Q_f$.

%\begin{remark} 
%\label{rk:GenDef} We can generalize the definition of Thurston maps as follows.  All results discussed in the present work hold for this general definition of Thurston maps. Note that extra marked points provide additional combinatorial information about $f$ but do not change topological structure of $f$ since that is determined by critical points and critical values.
%\end{remark}

Two Thurston maps $f$ and $g$ are \emph{Thurston equivalent} if and only if there exist two homeomorphisms $h_1,h_2\colon \Sphere \to \Sphere$ such that the diagram
\[
\begin{diagram}
\node{(\Sphere,Q_f)} \arrow{e,t}{h_1} \arrow{s,l}{f} \node{(\Sphere,Q_g)}
\arrow{s,l}{g}
\\
\node{(\Sphere,Q_f)} \arrow{e,t}{h_2} \node{(\Sphere,Q_g)}
\end{diagram}
\]
commutes, $h_1|_{Q_f}=h_2|_{Q_f}$, and
$h_1$ and $h_2$ are homotopic relative to $Q_f$.

A simple closed curve $\gamma$ is called \emph{essential} if every component of $\Sphere \sm \gamma$ contains at least two points of $Q_f$. We consider essential simple closed curves up to free homotopy in $\Sphere\sm Q_f$. A \emph{multicurve} is a finite set of pairwise disjoint and non-homotopic essential simple closed curves. Denote by $f^{-1}(\Gamma)$ the multicurve consisting of all essential preimages of curves in $\Gamma$. A multicurve $\Gamma=(\gamma_1,\ldots,\gamma_n)$ is called \emph{invariant} if each
component of $f^{-1}(\gamma_i)$ is either non-essential, or it is
homotopic (in $\Sphere\sm Q_f$) to a curve in $\Gamma$ (i.e. $f^{-1}(\Gamma) \subseteq \Gamma$).
We say that $\Gamma$ is \emph{completely invariant} if $f^{-1}(\Gamma) = \Gamma$. 

Every multicurve $\Gamma$ has its associated \emph{Thurston
matrix} $M_\Gamma=(m_{i,j})$ with
\[
m_{i,j}=\sum_{\gamma_{i,j,k}} (\deg f|_{\gamma_{i,j,k}}\colon
\gamma_{i,j,k}\to \gamma_j)^{-1}
\]
where $\gamma_{i,j,k}$ ranges  through all preimages of $\gamma_j$
that are homotopic to $\gamma_i$. Since all entries of $M_\Gamma$
are non-negative real, the leading eigenvalue $\lambda_\Gamma$ of
$M_\Gamma$ is real and non-negative (see Corollary~\ref{cor:largestEV}). 
\begin{remark}
Note that to define the Thurston matrix $M_\Gamma$, we do not require that $\Gamma$ be invariant.
\end{remark}

A multicurve $\Gamma$ is a
\emph{Thurston obstruction} if $\lambda_\Gamma\ge 1$. A Thurston obstruction $\Gamma$ is \emph{minimal} if no proper subset of $\Gamma$ is itself an obstruction. We call $\Gamma$ a \emph{simple} obstruction (compare \cite{P}) if no permutation of the curves in $\Gamma$ puts $M_\Gamma$ in the block form
$$ M_\Gamma = \left( 
\begin{array}{cc}
	M_{11}  & 0 \\
	M_{21}  & M_{22}
\end{array}
 \right),
$$ 
where  the leading eigenvalue of $M_{11}$ is less than $1$. If such a permutation exists, it follows that $M_{22}$ is a Thurston matrix of a multicurve with the same leading eigenvalue as $M_\Gamma$. It is, thus, evident that every obstruction contains a simple one. %Every simple obstruction is automatically completely invariant.  
%The following is an exercise in linear algebra.

\begin{proposition}
\label{prop:positive}
  A multicurve $\Gamma$ is a simple obstruction if and only if there exists a  vector $v>0$ such that $M_\Gamma v \ge v$.
\end{proposition}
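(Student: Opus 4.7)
The plan is to prove the two directions separately, treating the statement as a purely matrix-theoretic one about non-negative $n\times n$ matrices and inducting on $n = |\Gamma|$ for the harder implication. For the \emph{if} direction, assume $v > 0$ and $M_\Gamma v \ge v$. Iterating gives $M_\Gamma^n v \ge v$ for every $n$, and taking norms then $n$-th roots shows that $\lambda_\Gamma \ge 1$. If in addition some permutation produced the block decomposition with $\lambda(M_{11}) < 1$, then writing $v = (v_1, v_2)$ with $v_1 > 0$ and reading off the first block of $M_\Gamma v \ge v$ would yield $M_{11} v_1 \ge v_1$, and hence $\lambda(M_{11}) \ge 1$ by the same argument --- a contradiction. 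So $\Gamma$ is a simple obstruction.

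For the \emph{only if} direction, it is useful to rephrase simplicity graph-theoretically. Associate to $M_\Gamma$ the directed graph with vertex set $\Gamma$ and an arrow $j \to i$ whenever $m_{i,j} > 0$. Then the existence of a forbidden permutation is exactly the existence of a proper non-empty subset $T \subsetneq \Gamma$ that is \emph{backward-closed} (receives no arrows from its complement), so simplicity of $\Gamma$ says that every backward-closed proper $T$ has $\lambda(M_T) \ge 1$. The base case $n = 1$ is trivial; for the inductive step, apply Perron--Frobenius (Corollary~\ref{cor:largestEV}) to obtain a non-negative eigenvector $w \ge 0$, $w \ne 0$, with $M_\Gamma w = \lambda_\Gamma w$. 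Let $S := \{i : w_i > 0\}$. If $S = \Gamma$, take $v := w$. Otherwise $T := \Gamma \setminus S$ is proper and non-empty, and $(M_\Gamma w)_i = 0$ for $i \in T$ together with $w_j > 0$ for $j \in S$ forces $m_{i,j} = 0$ for every $(i,j) \in T \times S$, so $T$ is backward-closed in $M_\Gamma$.

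The key technical point is that $M_T$ inherits the simplicity hypothesis: any backward-closed proper subset $T_1 \subsetneq T$ inside the subgraph is automatically backward-closed in the full graph, since arrows into $T_1$ cannot come from $S$ (by backward-closedness of $T$ in $M_\Gamma$) or from $T \setminus T_1$. The inductive hypothesis applied to $M_T$ then produces $u > 0$ on $T$ with $M_T u \ge u$. Extending $u$ by zero on $S$ to $\tilde u$ and setting $v := w + \tilde u$, one checks $v > 0$ on $\Gamma$ and, splitting the product into blocks, $(M_\Gamma v)_S \ge \lambda_\Gamma w_S \ge w_S = v_S$ while $(M_\Gamma v)_T = M_T u \ge u = v_T$. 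The main obstacle, in my view, is recognizing that the support of a non-negative Perron eigenvector precisely isolates the deficient backward-closed subset on which induction must be performed; once this is observed, the rest is formal.
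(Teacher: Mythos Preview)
Your proof is correct. Both directions go through as written; in particular, the inductive step is sound because the zero-set $T$ of the Perron eigenvector is backward-closed, $\lambda(M_T)\ge 1$ follows from simplicity of $\Gamma$ applied to $T$ itself, and backward-closed proper subsets of $T$ remain backward-closed in $\Gamma$, so the full simplicity hypothesis descends and the induction is legitimate. The final combination $v=w+\tilde u$ checks out blockwise. (A minor quibble: Corollary~\ref{cor:largestEV} as stated only asserts that $\lambda_\Gamma$ is real and non-negative, not that a non-negative eigenvector exists, though the latter is of course standard for non-negative matrices.)

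Your route differs from the paper's. The paper avoids induction entirely: it takes, among all non-negative $v$ with $M_\Gamma v\ge v$, one of \emph{maximal support}, observes that the zero-set of such a $v$ is backward-closed (because $M_\Gamma v$ has the same support as $v$), and then argues that if the zero-block $M_{11}$ had leading eigenvalue $\ge 1$ one could enlarge the support by adding a non-negative eigenvector of $M_{11}$, contradicting maximality; hence $\lambda(M_{11})<1$ and $\Gamma$ is not simple. Your induction essentially reconstructs this maximal-support vector step by step, grafting a positive vector onto each successive zero-set. The paper's extremal argument is shorter and reaches the contradiction in one move; your argument is more constructive and makes the graph-theoretic reformulation of simplicity explicit, which is pleasant in its own right. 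Both hinge on the same observation that the complement of the support of a sub-invariant non-negative vector is backward-closed.
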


\begin{proof}
%Suppose  $M_\Gamma$ is a simple obstruction. 
Let $v$ be a non-negative vector such that $M_\Gamma v \ge v$ with a maximal possible number of positive components. Applying a permutation if necessary, we assume that $v=(0,v_1)^T$ where $v_1>0$. If we write $M_\Gamma$ in the corresponding block form we get:
$$ M_\Gamma = \left( 
\begin{array}{cc}
	M_{11}  & 0 \\
	M_{21}  & M_{22}
\end{array}
 \right).
$$   If the leading eigenvalue of $M_{11}$ is greater than 1, then there exists a non-negative eigenvector $v_2$ for $M_{11}$ such that $M_\Gamma (v_2\,  v_1)^T \ge  (v_2\,  v_1)^T$. The choice of  $v=(0,v_1)^T$ implies that either $v$ is positive, or the leading eigenvalue of $M_{11}$ is less than 1, and $\Gamma$ is not a simple obstruction. 

On the other hand, it is clear that if there exists a positive vector $v$ with $Mv \ge v$, then the leading eigenvalue of $M$ is at least 1. Therefore, if there exists a positive vector $v$ with $M_\Gamma v \ge v$, then $M_\Gamma$ cannot be written in a block form as above so that the leading eigenvalue of $M_{11}$ is less than 1.
\end{proof}

Note that every minimal obstruction is simple, and that a union of two disjoint simple obstructions is also simple (but not minimal). A \emph{Levy cycle} is a multicurve $\Gamma=\{\gamma_1,\ldots,\gamma_n\}$ such that for every $i=\overline{1,n}$, there exists a preimage component of $\gamma_i$ that is homotopic to $\gamma_{i+1}$ and is mapped to $\gamma_i$ by $f$ with degree 1 (we set $\gamma_{n+1}=\gamma_1$). Every Levy cycle is a Thurston obstruction.

Thurston's original characterization theorem is formulated as follows:

\begin{theorem}[Thurston's Theorem \cite{DH}]
\label{thm:Thurston}  A postcritically finite  bran\-ched cover
$f\colon\Sphere\to\Sphere$ with hyperbolic orbifold is either
Thurston-equivalent to a rational map $g$ (which is then
necessarily unique up to conjugation by a Moebius transformation), or
$f$ has a Thurston obstruction.
\end{theorem}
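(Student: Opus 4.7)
The plan is to prove this via the Teichmüller-theoretic iteration due to Thurston, working on the Teichmüller space $\mathcal{T}=\mathcal{T}(\Sphere,P_f)$ of complex structures on the sphere marked by $P_f$, with moduli space $\mathcal{M}=\mathcal{M}(\Sphere,P_f)$. First I would define the \emph{Thurston pullback map} $\sigma_f\colon \mathcal{T}\to\mathcal{T}$ as follows: a point $\tau\in\mathcal{T}$ is represented by a complex structure $\mu$ on $\Sphere$; pulling $\mu$ back through $f$ and using the uniformization theorem (since $f$ is a branched cover of the sphere), one obtains a new complex structure $f^\ast\mu$, hence a new point $\sigma_f(\tau)\in\mathcal{T}$. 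The central observation is that $f$ is Thurston equivalent to a rational map if and only if $\sigma_f$ has a fixed point in $\mathcal{T}$; indeed, a fixed point provides a complex structure in which the lift representing $f$ is holomorphic.

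The next step is the contraction argument. The pullback on quadratic differentials is norm non-increasing, and because $f$ has hyperbolic orbifold and degree at least two, $\sigma_f$ is a weak contraction of the Teichmüller (Kobayashi) metric; moreover, on sufficiently thick parts of $\mathcal{T}$ one gets uniform contraction. Consequently, if some orbit $\sigma_f^n(\tau_0)$ stays in a compact part of the moduli space $\mathcal{M}$, a subsequence converges to a fixed point, and we are in the rational case. Uniqueness up to Möbius conjugation then follows because any two conjugating Möbius transformations would yield a holomorphic automorphism of $\Sphere$ fixing the common marking, hence the identity.

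The heart of the proof is the converse: if no fixed point exists, the orbit $\sigma_f^n(\tau_0)$ must leave every compact subset of $\mathcal{M}$, and we must extract a Thurston obstruction. Here I would use the collar lemma together with the fact that the pullback $f^\ast$ acts on free homotopy classes of essential curves in $\Sphere\sm P_f$ in a way governed, up to bounded multiplicative error, by the Thurston matrix $M_\Gamma$ where $\Gamma$ is the set of curves whose geodesic representatives become short along the degenerating sequence. Concretely, for every essential simple closed curve $\gamma$, the hyperbolic length $\ell_{\sigma_f(\tau)}(\gamma)$ is bounded above by $\sum_{\gamma'} (\deg f|_{\gamma'})^{-1}\,\ell_\tau(f(\gamma'))$ where $\gamma'$ ranges over preimage components, plus controlled terms from non-short curves. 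Iterating this inequality along the diverging orbit and passing to a limit forces the vector of inverse lengths of curves in $\Gamma$ to satisfy $M_\Gamma v\ge v$ for some positive $v$, so by Proposition~\ref{prop:positive} $\Gamma$ is a (simple) obstruction with $\lambda_\Gamma\ge 1$.

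The main obstacle I expect is precisely this last step: bounding hyperbolic lengths of preimage curves under the pullback uniformly enough to conclude that the short curves really do form a multicurve and that no stray contributions from non-homotopic preimages spoil the inequality $M_\Gamma v\ge v$. This requires the collar lemma to separate short geodesics, a careful choice of the subsequence along which the ratios of lengths stabilize, and the fact that the hyperbolic orbifold assumption prevents the orbifold-covering degeneracies that would otherwise allow $\sigma_f$ to be an isometry without a fixed point.
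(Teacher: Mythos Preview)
The paper does not prove this theorem: it is stated with attribution to Douady--Hubbard and used as a black box, so there is no ``paper's own proof'' to compare against. Your outline is precisely the Douady--Hubbard strategy (pullback $\sigma_f$ on $\mathcal{T}_f$, fixed point $\Leftrightarrow$ rational, weak contraction from the hyperbolic-orbifold hypothesis, extract short curves when the projected orbit leaves every compact set in $\mathcal{M}_f$), and the architecture is correct.

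One concrete error to fix: the key inequality is stated the wrong way. What one actually proves, via covering annuli and the Gr\"otzsch inequality, is an estimate on \emph{reciprocal} lengths (equivalently, maximal moduli):
\[
\frac{1}{\ell(\gamma_i,\sigma_f(\tau))}\ \ge\ \sum_j m_{ij}\,\frac{1}{\ell(\gamma_j,\tau)}\ -\ O(1),
\]
so the vector $v$ of inverse lengths of the short curves satisfies $v'\ge M_\Gamma v - C$, and after iteration and normalization one gets $M_\Gamma v\ge v$. Your formulation, bounding $\ell_{\sigma_f(\tau)}(\gamma)$ above by a \emph{sum} $\sum_{\gamma'} (\deg f|_{\gamma'})^{-1}\ell_\tau(f(\gamma'))$, goes in the wrong direction and does not follow from the covering picture. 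With that corrected, the rest of your sketch (collar lemma to guarantee the short curves are disjoint and hence form a multicurve, choice of subsequence so that ratios stabilize, invoking Proposition~\ref{prop:positive}) is the right plan. A minor point: uniqueness of the rational map is usually deduced from strict contraction of $\sigma_f$ (hence uniqueness of the fixed point), not from the marking argument you give.
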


\begin{remark} In the original formulation in \cite{DH}, a Thurston obstruction was required to be invariant. Omitting this requirement makes the statement of the theorem weaker in one direction and stronger in the other direction. However, in \cite{thesis} we showed that if there exists a Thurston obstruction for $f$, then there also exists a simple invariant obstruction (see Proposition~\ref{prop:simple}).
\end{remark}

General rigorous definition of orbifolds and their Euler
characteristic can be found in \cite{M}. In our case, there is a unique and straightforward way to construct the minimal
function $v_f$ of all functions  $v:\Sphere \to \N \cup \{\infty\}$
satisfying the following two conditions:
\begin{enumerate}
 \item[(i)] $v(x)=1$ when $x \notin P_f$;
 \item[(ii)] $v(x)$ is divisible by $v(y) \deg_y f$ for all  $y \in f^{-1}(x)$.
\end{enumerate}
We say that $f$ has hyperbolic orbifold $O_f=(\Sphere,v_f)$ if the
Euler characteristic of $O_f$
\begin{equation}
\chi(O_f)=2- \sum_{x \in P_f} \left( 1- \frac{1}{v_f(x)} \right) 
\label{eq:euler}
\end{equation} is less than 0, and  parabolic orbifold otherwise. We discuss Thurston maps with parabolic orbifolds in more detail in Section~\ref{sec:parabolic}.

\section{Thurston's pullback map and canonical obstructions}%\Teich space and Thurston iteration}
\label{sec:Teich}

Let $\T_f$ be the \Teich space modeled on the marked surface $(\Sphere,Q_f)$ and $\M_f$  be the corresponding moduli space. We write $\tau =\left\langle h \right\rangle$ if a point $\tau$ is represented by a diffeomorphism $h$. Correspondingly, points of $\M_f$ are represented by $h(Q_f)$ modulo post-composition with Moebius transformations. Denote by $\pi:\T_f \to \M_f$ the canonical covering map which sends $h \mapsto h|_{Q_f}$. The (pure) mapping class group of $(\Sphere,Q_f)$ is canonically identified with the group of deck transformations of $\pi$. For more background on \Teich spaces see, for example, \cite{IT,H}. 

Consider an essential simple closed curve $\gamma$ in $(\Sphere,Q_f)$. For each complex structure $\tau$ on $(\Sphere,Q_f)$, there exists a unique geodesic $\gamma_\tau$ in the homotopy class of $\gamma$.  We denote by $l(\gamma,\tau)$ the length of the geodesic $\gamma_\tau$ homotopic to $\gamma$ on the Riemann surface corresponding to $\tau \in \T_f$. This defines a continuous function from $\T_f$ to $\R_+$ for any given $\gamma$. Moreover, $\log l(\gamma,\tau)$ is a Lipschitz function with Lipschitz constant $1$ with respect to the \Teich metric (see \cite[Theorem 7.6.4]{H}; note that in \cite[Proposition 7.2]{DH} the constant is 2 because of a different normalization of the \Teich metric).
We will use the same notation $l(\gamma, R)$ for the hyperbolic length of a curve $\gamma$ in a hyperbolic surface $R$. Recall that, by the Collar Lemma, the length of a simple closed geodesic $\gamma$ on a hyperbolic Riemann surface $R$ is closely related to the supremum $M(\gamma,R)$ of moduli of all annuli on $R$ that are homotopic to this geodesic, namely $$\frac{\pi}{l(\gamma,R)}-1 < M(\gamma,R) < \frac{\pi}{l(\gamma,R)}.$$
%The Collaring Lemma  \cite{DH,H} provides one estimate: $M(\gamma,R) > \frac{\pi}{l}-1$. The estimate $M(\gamma,R) \le \frac{\pi}{l}$ is trivial since the core curve of an annulus of modulus $m$ has length at most $\frac{\pi}{m}$.

%We define the key player in the proof of Theorem~\ref{thm:Thurston} --- 

The Thurston's pullback $\sigma_f$ is defined as follows. Suppose $\tau \in \T_f$ is represented by a homeomorphism $h_\tau$. Consider the following diagram:

\begin{equation}
\begin{diagram}
\node{(\Sphere,Q_f)}  \arrow{s,l}{f} %\node{(\P)}
%\arrow{s,l}{g}
\\
\node{(\Sphere,Q_f)} \arrow{e,t}{h_\tau} \node{(\P,h_\tau(Q_f))}
\end{diagram}
\label{dg1}
\end{equation}

We can define a complex structure on $(\Sphere, Q_f)$ by pulling  back the complex structure from $\P$ by $h_\tau \circ f$. By the uniformization theorem,  the constructed Riemann surface  is isomorphic to the Riemann sphere $\P$;  let $h_1$ be a conformal isomorphism between $(\Sphere,Q_f)$, endowed with the pulled back complex structure, and $\P$. Set $\sigma_f(\tau)=\tau_1$ where $\tau_1$ is the point represented by $h_1$.

%We can pull back the standard complex structure $\mu_0$ on $\P$ to an almost-complex structure $f^* h_\tau^* \mu_0$ on $(\Sphere,P_f)$. By the Measurable Riemann mapping theorem, it induces a complex structure on $(\Sphere,P_f)$. Let $h_1$ be a conformal isomorphism between $(\Sphere,P_f)$ endowed with the complex structure $f^* h_\tau^* \mu_0$ and $\P$. Set $\sigma_f(\tau)=\tau_1$ where $\tau_1$ is the point represented by $h_1$.

Now we can complete the previous diagram by setting $f_\tau=h_\tau \circ f \circ h_1^{-1}$ so that it commutes:

\begin{equation}
\begin{diagram}
\node{(\Sphere,Q_f)} \arrow{e,t}{h_1} \arrow{s,l}{f} \node{(\P,h_1(Q_f))}
\arrow{s,l}{f_\tau}
\\
\node{(\Sphere,Q_f)} \arrow{e,t}{h_\tau} \node{(\P,h_\tau(Q_f))}
\end{diagram}
\label{dg2}
\end{equation}

Note that from definition of $f_\tau$, it follows that $f_\tau$ respects the standard complex structure $\mu_0$ and, hence, is rational. When we choose a representing homeomorphism $h_\tau$, we have the freedom to post-compose $h_\tau$ with any Moebius transformation; similairly, the choice of $h_\tau$ defines $h_1$ up to a post-composition by Moebius transformation. Thus, $f_\tau$ is defined up to pre- and post-composition by Moebius transformations.

%It has been shown in \cite{DH} that $\sigma_f$ is a holomorphic self-map of $\T_f$ and that the co-derivative of $\sigma_f$  satisfies $(d\sigma_f(\tau))^*=(f_\tau)_*$ where $(f_\tau)_*$ is the push-forward operator on quadratic differentials. It is straightforward to prove $\|d\sigma_f\|_T=\|(d\sigma_f)^*\|_T \le 1$, and with a little more effort one gets $\|(d\sigma_f^k)\|_T < 1$, for some $k\in \N$,  when $f$ has hyperbolic orbifold (see \cite{TanLei}), hence, $\sigma_f$ is weakly contracting on $\T_f$ with respect to the \Teich metric for any such $f$. Since $\T_f$ is path-connected, it follows that $\sigma_f$ has at most one fixed point if $f$ has hyperbolic orbifold, and every forward orbit of $\sigma_f$ converges to the fixed point in the case there exists one.

The following proposition \cite[Proposition 2.3]{DH} relates dynamical properties of $\sigma_f$ to the original question.

\begin{proposition}
\label{prop:fixedpts}
A Thurston map $f$ is equivalent to a rational function if and only if $\sigma_f$ has a fixed point.
\end{proposition}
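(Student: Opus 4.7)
The plan is to prove both implications by directly tracing through the defining diagrams~(\ref{dg1}) and~(\ref{dg2}) of $\sigma_f$, carefully accounting for the equivalence classes modulo isotopy rel $P_f$.

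For the ``if'' direction, suppose $\sigma_f(\tau)=\tau$ for some $\tau\in\T_f$, and pick any representative $h_\tau$ of $\tau$. Carrying out the construction of $\sigma_f$ produces $h_1$ with $\langle h_1\rangle=\langle h_\tau\rangle$, so $h_1$ and $h_\tau$ are isotopic rel $P_f$. The map $f_\tau=h_\tau\circ f\circ h_1^{-1}$ is holomorphic by construction of the pullback complex structure, hence rational. Setting $h_1$ and $h_\tau$ in place of the two horizontal homeomorphisms in the definition of Thurston equivalence shows that $(f,P_f)$ and $(f_\tau, h_1(P_f))$ are Thurston equivalent, and the isotopy $h_1\simeq h_\tau$ rel $P_f$ supplies the required homotopy condition.

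For the ``only if'' direction, suppose $f$ is Thurston equivalent to a rational map $g$ via homeomorphisms $h_1,h_2$ satisfying $g\circ h_1=h_2\circ f$, agreeing on $P_f$, and isotopic rel $P_f$. Choose $\tau=\langle h_2\rangle$ and compute the pullback of the standard complex structure: since $g$ is holomorphic, $g^*\mu_0=\mu_0$, so
\[
f^*h_2^*\mu_0 \;=\; (h_2\circ f)^*\mu_0 \;=\; (g\circ h_1)^*\mu_0 \;=\; h_1^*g^*\mu_0 \;=\; h_1^*\mu_0.
\]
Thus $h_1$ itself is a conformal isomorphism from $(\Sphere,P_f)$ with the pullback structure to $(\P,h_1(P_f))$, so we may take $h_1$ as the map defining $\sigma_f(\tau)$. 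Because $h_1\simeq h_2$ rel $P_f$, they represent the same point in $\T_f$, giving $\sigma_f(\tau)=\langle h_1\rangle=\langle h_2\rangle=\tau$.

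The main subtlety, which is where a reader can easily get confused rather than where any real obstacle lies, is bookkeeping: keeping track of the indeterminacy in the choice of $h_\tau$ and $h_1$ (each defined only up to post-composition by a Möbius transformation), and remembering that points of $\T_f$ are isotopy classes rel $P_f$ rather than actual homeomorphisms. Once those are handled, both directions reduce to the single algebraic observation that rationality of $f_\tau$ (respectively of $g$) exactly means the relevant pulled-back complex structure already agrees with the standard one up to the action of the homeomorphism on one side of the diagram.
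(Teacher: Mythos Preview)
Your argument is correct and is exactly the standard proof (as in \cite{DH}). The paper itself does not supply a proof of this proposition; it merely cites \cite[Proposition~2.3]{DH}. Both directions you give are the usual ones: a fixed point $\tau$ makes $f_\tau$ rational with $h_1\simeq h_\tau$ rel $P_f$ furnishing the Thurston equivalence, and conversely a Thurston equivalence $g\circ h_1=h_2\circ f$ with $g$ rational shows $f^*h_2^*\mu_0=h_1^*\mu_0$, so $\sigma_f(\langle h_2\rangle)=\langle h_1\rangle=\langle h_2\rangle$. Your closing remark about the M\"obius and isotopy-class ambiguities is accurate and is the only point that requires care.
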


%Moreover, it is not hard to see that non-conjugate rational functions must correspond to  different fixed points. Therefore, the uniqueness part of Theorem~\ref{thm:Thurston} is clear.

The \emph{canonical} obstruction $\Gamma_f$ is the set of all homotopy classes of curves $\gamma$ that satisfy $l(\gamma,\sigma_f^n(\tau)) \to 0$ for all (or, equivalently, for some) $\tau \in \T_f$. In \cite{thesis} we proved the following (see also \cite{P}).

\begin{proposition}
\label{prop:simple}
If $\Gamma_f$ is not empty then it is a simple completely invariant Thurston obstruction.
\end{proposition}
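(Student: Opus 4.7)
The plan is to verify, in sequence, the four defining properties of $\Gamma_f$: that it is a finite multicurve, that $f^{-1}(\Gamma_f)\subseteq\Gamma_f$, that it is a simple Thurston obstruction, and that the reverse inclusion $\Gamma_f\subseteq f^{-1}(\Gamma_f)$ also holds. Write $\tau_n=\sigma_f^n(\tau)$. The Collar Lemma gives the multicurve property at once: there is a universal $\varepsilon_0>0$ below which essential simple closed geodesics on a hyperbolic Riemann surface are pairwise disjoint, and since $l(\gamma,\tau_n)\to 0$ for every $\gamma\in\Gamma_f$, for $n$ large every member of $\Gamma_f$ is simultaneously realized by such a short geodesic at $\tau_n$, forcing disjointness and bounded cardinality. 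For the inclusion $f^{-1}(\Gamma_f)\subseteq\Gamma_f$, fix $\gamma\in\Gamma_f$ and an essential component $\gamma'$ of $f^{-1}(\gamma)$ of degree $d$: in diagram (\ref{dg2}) with $\tau=\tau_n$, pulling back the standard collar around the geodesic representative of $h_{\tau_n}(\gamma)$ under $f_{\tau_n}$ produces an annulus around $h_1(\gamma')$ of modulus at least $M(\gamma,\tau_n)/d\to\infty$. By the Collar Lemma $l(\gamma',\tau_{n+1})\to 0$, so $\gamma'\in\Gamma_f$.

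For the obstruction and simplicity properties, the core tool is the Gr\"otzsch-based pullback inequality
\[
M(\gamma_i,\tau_{n+1})\,\geq\,\sum_j m_{ij}\,M(\gamma_j,\tau_n),
\]
obtained by stacking the disjoint annular preimages around each $\gamma_j\in\Gamma_f$ inside the maximal collar of $\gamma_i$ on the source. Since $\Gamma_f\ne\emptyset$ implies $\sigma_f$ has no fixed point (Proposition~\ref{prop:fixedpts}), Thurston's Theorem~\ref{thm:Thurston} furnishes some Thurston obstruction $\Gamma_0$ with $\lambda_{\Gamma_0}\geq 1$. Pairing the above inequality with a non-negative Perron eigenvector of $M_{\Gamma_0}$ and iterating forces every curve in the support of that eigenvector to have modulus tending to infinity, i.e. to belong to $\Gamma_f$; after passing to this support one may assume $\Gamma_0\subseteq\Gamma_f$. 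Because each entry $m_{ij}$ depends only on $(\gamma_i,\gamma_j)$ and $f$, $M_{\Gamma_0}$ is a principal submatrix of $M_{\Gamma_f}$, so by Perron--Frobenius $\lambda_{\Gamma_f}\geq\lambda_{\Gamma_0}\geq 1$, showing $\Gamma_f$ is a Thurston obstruction. To upgrade this to simplicity I will construct the strictly positive vector $v>0$ satisfying $M_{\Gamma_f}v\geq v$ required by Proposition~\ref{prop:positive}, by assembling Perron eigenvectors of the maximal strongly connected components of $M_{\Gamma_f}$ and extending through the triangular dependencies exactly as in the proof of Proposition~\ref{prop:positive}. The technical heart of the argument --- using the pullback inequality together with the $1$-Lipschitz property of $\log l(\gamma,\cdot)$ in the \Teich metric and the non-expanding property of $\sigma_f$ to rule out any strongly connected sub-block of sub-unit eigenvalue inside $\Gamma_f$ --- is where I expect the main obstacle to lie.

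Complete invariance then follows formally from simplicity. Suppose some $\gamma_i\in\Gamma_f$ were not homotopic to any essential preimage of any curve in $\Gamma_f$; then by the very definition of the Thurston matrix, $m_{ij}=0$ for every $j$, so the $i$-th row of $M_{\Gamma_f}$ vanishes and $(M_{\Gamma_f}v)_i=0<v_i$ contradicts $M_{\Gamma_f}v\geq v$. Therefore $\Gamma_f\subseteq f^{-1}(\Gamma_f)$, which combined with the earlier inclusion gives $f^{-1}(\Gamma_f)=\Gamma_f$, completing complete invariance.
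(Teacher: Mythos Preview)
The paper does not actually prove Proposition~\ref{prop:simple}; it is stated with a reference to \cite{thesis} (and \cite{P}) and no argument is given in the text. So there is no in-paper proof to compare your proposal against, and I can only assess your argument on its own merits.

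Your treatment of the first and last parts is fine: the Collar Lemma gives finiteness and disjointness of $\Gamma_f$; the annulus-pullback argument gives $f^{-1}(\Gamma_f)\subseteq\Gamma_f$; and deducing $\Gamma_f\subseteq f^{-1}(\Gamma_f)$ from simplicity via a vanishing row of $M_{\Gamma_f}$ is clean and correct.

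The middle portion, however, has genuine gaps. First, your route to ``$\Gamma_f$ is an obstruction'' goes through Thurston's Theorem~\ref{thm:Thurston} to produce some obstruction $\Gamma_0$, but that theorem is stated only for hyperbolic orbifold, whereas Proposition~\ref{prop:simple} is used in the paper for all Thurston maps (including $(2,2,2,2)$-maps). Second, even granting an obstruction $\Gamma_0$, your claim that pairing the Gr\"otzsch inequality with a Perron eigenvector ``forces every curve in the support \ldots\ to have modulus tending to infinity'' fails when $\lambda_{\Gamma_0}=1$: iterating $r_{n+1}\geq\lambda_{\Gamma_0}\,r_n$ only gives monotonicity, not divergence, so you cannot conclude $\Gamma_0\subseteq\Gamma_f$ in that case. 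Third, and most importantly, the step you flag as the ``technical heart'' --- ruling out a sub-unit irreducible block inside $M_{\Gamma_f}$ --- cannot be done with the tools you list. The Gr\"otzsch inequality is a \emph{lower} bound on $M(\gamma_i,\tau_{n+1})$; to show that moduli of curves in a sub-unit block stay bounded (contradicting membership in $\Gamma_f$) you need an \emph{upper} bound of the form
\[
M(\gamma_i,\tau_{n+1})\ \leq\ \sum_j m_{ij}\,M(\gamma_j,\tau_n)\ +\ C,
\]
valid once all curves outside $\Gamma_f$ are uniformly long. This is the Douady--Hubbard thick-thin estimate (cf.\ \cite[Section~7]{DH}), and neither the $1$-Lipschitz property of $\log l(\gamma,\cdot)$ nor the weak contraction of $\sigma_f$ produces it. Once you have this upper bound, the simplicity argument is straightforward (and your detour through Theorem~\ref{thm:Thurston} becomes unnecessary: the same upper bound directly shows $\lambda_{\Gamma_f}\geq 1$).
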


The following is immediate.

\begin{proposition}
\label{prop:iterate}
$\Gamma_f=\Gamma_{f^r}$.
\end{proposition}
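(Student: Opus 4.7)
The plan is to reduce this to the semigroup relation $\sigma_{f^r}=\sigma_f^r$ on an appropriately identified Teichm\"uller space, and then read off both inclusions from the definition of the canonical obstruction.

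First, I would verify that $\T_{f^r}$ and $\T_f$ may be identified. Since $P_f$ is forward invariant under $f$ and contains the critical values of $f$, one checks that the critical values of $f^r$ all lie in $\bigcup_{i=1}^{r-1} f^i(\Omega_f)\subseteq P_f$, while $P_f$ is automatically forward invariant under $f^r$. Thus the pair $(f^r,P_f)$ is a Thurston map, and we may take $P_{f^r}=P_f$, so that $\T_{f^r}=\T_f$.

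Next, the key lemma: $\sigma_{f^r}=\sigma_f^r$ as self-maps of $\T_f$. To prove this I would unwind the commutative diagram~(\ref{dg2}) inductively. Fix $\tau=\langle h_\tau\rangle$ and let $\sigma_f(\tau)=\langle h_1\rangle$ with $h_\tau\circ f = f_\tau\circ h_1$ for the rational map $f_\tau$. Applying $\sigma_f$ again produces $\sigma_f^2(\tau)=\langle h_2\rangle$ and a rational $g_2$ with $h_1\circ f = g_2\circ h_2$; composing gives $h_\tau\circ f^2 = (f_\tau\circ g_2)\circ h_2$, which is the defining diagram for $\sigma_{f^2}(\tau)$. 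Iterating this $r$ times yields $h_\tau\circ f^r = (f_\tau\circ g_2\circ\cdots\circ g_r)\circ h_r$, with the middle map rational and $h_r$ representing $\sigma_f^r(\tau)$. Since the Thurston pullback is uniquely characterized (up to the standard ambiguity) by this diagram, we conclude $\sigma_{f^r}(\tau)=\sigma_f^r(\tau)$.

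Finally, I would derive both inclusions. If $\gamma\in\Gamma_f$, then $l(\gamma,\sigma_f^n(\tau))\to 0$, so in particular the subsequence $l(\gamma,\sigma_f^{rn}(\tau))=l(\gamma,\sigma_{f^r}^n(\tau))\to 0$, giving $\gamma\in\Gamma_{f^r}$. Conversely, suppose $\gamma\in\Gamma_{f^r}$. Using that the defining condition holds for \emph{all} $\tau\in\T_f=\T_{f^r}$, apply it to the $r$ base points $\sigma_f^k(\tau)$ for $k=0,1,\ldots,r-1$: for each such $k$,
\[
l\bigl(\gamma,\sigma_{f^r}^n(\sigma_f^k(\tau))\bigr)=l\bigl(\gamma,\sigma_f^{nr+k}(\tau)\bigr)\xrightarrow[n\to\infty]{}0.
\]
As these $r$ arithmetic progressions cover all sufficiently large integers, $l(\gamma,\sigma_f^m(\tau))\to 0$, so $\gamma\in\Gamma_f$.

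The only nontrivial step is the semigroup relation $\sigma_{f^r}=\sigma_f^r$, which is essentially a diagram chase but requires keeping the base points and normalizations of the representing homeomorphisms straight; everything else is a direct consequence of the definition combined with the fact that the defining convergence $l(\gamma,\sigma_f^n(\tau))\to 0$ holds simultaneously at every $\tau$.
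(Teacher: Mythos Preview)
Your proof is correct. The forward inclusion $\Gamma_f\subseteq\Gamma_{f^r}$ via subsequences, and the identity $\sigma_{f^r}=\sigma_f^r$, are exactly what the paper uses (the paper takes the latter for granted; you spell it out, which is fine).

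For the reverse inclusion $\Gamma_{f^r}\subseteq\Gamma_f$ you take a genuinely different route. The paper argues metrically: with $D=d_T(\tau,\sigma_f(\tau))$, weak contraction of $\sigma_f$ together with the fact that $\log l(\gamma,\cdot)$ is $1$-Lipschitz gives
\[
l(\gamma,\sigma_f^{nr+s}(\tau))\le e^{Ds}\,l(\gamma,\sigma_f^{nr}(\tau)),
\]
so convergence along the subsequence $\{nr\}$ forces convergence of the full sequence. You instead exploit the ``for all $\tau$'' clause in the definition of $\Gamma_{f^r}$, applying it at the $r$ shifted base points $\sigma_f^k(\tau)$, $k=0,\dots,r-1$, to cover all residue classes. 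Your argument is cleaner in that it avoids any explicit estimate; on the other hand, the equivalence of ``for all $\tau$'' and ``for some $\tau$'' in the definition is itself proved by exactly this Lipschitz bound, so you are using the same metric input implicitly, packaged into the definition. Either route is perfectly valid.

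One trivial slip: the critical values of $f^r$ lie in $\bigcup_{i=1}^{r} f^i(\Omega_f)$, not $\bigcup_{i=1}^{r-1}$; this does not affect the conclusion since both sets are contained in $P_f$.
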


\begin{proof} Take any $\tau \in \T_f$. If $\gamma \in \Gamma_f$ then $l(\gamma, \sigma^n_f(\tau) ) \to 0$. In particular, $l(\gamma,\sigma^{nr}_f(\tau) ) = l(\gamma,\sigma^{n}_{f^r}(\tau) )\to 0$, hence $\gamma \in \Gamma_{f^n}$. 

Set $D=d_T(\tau,\sigma_f(\tau))$. Since $\sigma_f$ is weakly contracting and $\log l(\cdot, \gamma)$ is 1-Lipschitz, we have $l (\gamma,\sigma^{nr+s}_f(\tau)) \le e^{Ds} l( \gamma,\sigma^{nr}_f(\tau))$. Therefore, if $l( \gamma,\sigma^{nr}_f(\tau)) \to 0$ then  $l( \gamma,\sigma^n_f(\tau)) \to 0$ as well.
 \end{proof}

The next theorems are due to Kevin Pilgrim \cite{P}. 
\begin{theorem}[Canonical Obstruction Theorem]
\label{thm:CanonicalObstruction}
If for a Thurston map with hyperbolic orbifold, its canonical obstruction is empty then it is Thurston equivalent to a rational function. If the canonical obstruction is not empty then it is a Thurston obstruction.
\end{theorem}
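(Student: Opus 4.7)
The second implication follows immediately from Proposition~\ref{prop:simple}, which already asserts that a nonempty $\Gamma_f$ is a Thurston obstruction; the substantive content lies in the first implication. For that, fix a basepoint $\tau\in\T_f$ and set $\tau_n=\sigma_f^n(\tau)$. Our plan is to produce a fixed point of $\sigma_f$ and then invoke Proposition~\ref{prop:fixedpts}.

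The argument proceeds in two steps. Step~1 (precompactness in moduli space): show that $\{\pi(\tau_n)\}$ has compact closure in $\M_f$. By Mumford compactness this amounts to a uniform lower bound $\inf_n\inf_\gamma l(\gamma,\tau_n)>0$. The hypothesis $\Gamma_f=\emptyset$ only gives, for each fixed $\gamma$, that $l(\gamma,\tau_n)\not\to 0$; to upgrade to a uniform statement we argue by contradiction. If some subsequence $\tau_{n_k}$ admitted essential curves $\beta_k$ with $l(\beta_k,\tau_{n_k})\to 0$, then combining (i) the Margulis lemma (the short curves at any time $n$ form a multicurve with at most $|P_f|-3$ components), (ii) the 1-Lipschitz property of $\log l(\beta,\cdot)$ in the \Teich metric, and (iii) the bound $d_T(\tau_{n+1},\tau_n)\le d_T(\sigma_f(\tau),\tau)$ coming from weak contraction of $\sigma_f$, we can pass to a further subsequence and extract a \emph{single} essential curve $\beta$ whose length tends to zero along the full sequence, contradicting $\beta\notin\Gamma_f$.

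Step~2 (extraction of the fixed point): the distances $D_n=d_T(\tau_{n+1},\tau_n)$ are non-increasing by weak contraction, so $D_n\searrow D_\infty\ge 0$. Using precompactness of $\{\pi(\tau_n)\}$, we pass to a subsequence along which $\tau_{n_k}\to \tau^*$ in $\T_f$ (after possibly applying mapping class group elements). Continuity of $\sigma_f$ gives $d_T(\tau^*,\sigma_f(\tau^*))=D_\infty$; the pointwise strict contraction of $\sigma_f$ on $\T_f$ granted by the hyperbolic orbifold hypothesis (see \cite{DH}) forces $D_\infty=0$, after which a standard Cauchy-type argument promotes $\tau^*$ to a genuine fixed point of $\sigma_f$.

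The main obstacle is the combinatorial bookkeeping in Step~1: the identity of the shortest curve could in principle vary with $n$, so one cannot immediately pass from ``no fixed curve shrinks to zero'' to ``no curve shrinks to zero at any time.'' An alternative and conceptually cleaner route is to work in the augmented \Teich space of \cite{S11}, to which $\sigma_f$ extends continuously; in that setting Step~1 becomes the statement that the orbit closure avoids the boundary, which is essentially equivalent to $\Gamma_f=\emptyset$ by the very definition of the canonical obstruction.
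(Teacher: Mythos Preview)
The paper does not actually prove Theorem~\ref{thm:CanonicalObstruction}; it is quoted from Pilgrim~\cite{P} and used as input for the rest of the article. Your Step~1 is essentially Theorem~\ref{thm:curvebound} (also quoted from~\cite{P}), and your proposed alternative via the augmented \Teich space is the content of Theorem~\ref{thm:pilgrims}. So for the first step you are invoking, in sketched form, exactly the results the paper imports.

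Your Step~2, however, has a genuine gap. Precompactness of $\{\pi(\tau_n)\}$ in $\M_f$ only gives convergence $g_k\cdot\tau_{n_k}\to\tau^*$ in $\T_f$ for some mapping classes $g_k$, and $\sigma_f$ is \emph{not} equivariant for the mapping class group action: in general $\sigma_f(g\cdot\tau)\neq g\cdot\sigma_f(\tau)$ (this is precisely why the intermediate cover $\M'_f$ of Proposition~\ref{prop:covers} is needed). Consequently $d_T\bigl(g_k\cdot\tau_{n_k},\,\sigma_f(g_k\cdot\tau_{n_k})\bigr)$ need not equal $D_{n_k}$, and the conclusion $d_T(\tau^*,\sigma_f(\tau^*))=D_\infty$ does not follow from continuity of $\sigma_f$. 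Even granting that equality, pointwise strict contraction at $\tau^*$ would only yield $d_T(\sigma_f(\tau^*),\sigma_f^2(\tau^*))<D_\infty$, which says nothing about the orbit of the original $\tau$. The argument that actually works (as in~\cite{DH,P}) is that the contraction factor $\|d\sigma_f\|_\tau$ is controlled by the projection to moduli space; once the orbit projects into a compact $K\subset\M_f$, one gets a uniform bound $\|d\sigma_f\|_{\tau_n}\le c<1$, hence $D_n\le c^n D_0$ and $\{\tau_n\}$ is Cauchy \emph{in $\T_f$ itself}. No subsequential extraction through the mapping class group is required, and the limit is the desired fixed point.
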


\begin{theorem}[Curves Degenerate or Stay Bounded]
\label{thm:curvebound}
For any point $\tau \in \T_f$ there exists a bound $L=L(\tau, f)>0$ such that for any essential simple closed curve $\gamma \notin \Gamma_f$ the inequality $l(\gamma,\sigma_f^n(\tau)) \ge L$ holds for all $n$.  
\end{theorem}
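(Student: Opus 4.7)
The plan is to argue by contradiction and to show that any curve which becomes arbitrarily short along the orbit $(\sigma_f^n(\tau))$ must lie in $\Gamma_f$. Suppose no uniform $L$ exists: then one finds essential simple closed curves $\gamma_k \notin \Gamma_f$ and indices $n_k$ with $l(\gamma_k, \sigma_f^{n_k}(\tau)) \to 0$. As a first step, the Collar Lemma together with the estimate $M(\gamma, R) < \pi / l(\gamma, R)$ recalled in Section~\ref{sec:basic} implies that at each $\sigma_f^n(\tau)$, the simple closed curves of length below a fixed $\eps_0 > 0$ are pairwise disjoint, hence form a multicurve of cardinality at most $p_f - 3$. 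For large $k$, therefore, $\gamma_k$ lies in this short multicurve at time $n_k$.

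The heart of the argument takes place in the augmented Teichm\"uller space $\AT_f$, in which length functions extend continuously with value $0$ on pinched curves. By compactness of $\AM_f$, some subsequence of $(\sigma_f^{n_k}(\tau))$ converges in $\AT_f$ to a noded surface $R_\infty$ with finite pinched multicurve $\Gamma^\star$. Refining further, we may assume all $\gamma_k$ equal a single fixed curve $\gamma_\infty \in \Gamma^\star$, which by hypothesis satisfies $\gamma_\infty \notin \Gamma_f$. The central step is then to show that any curve pinched at an accumulation point of the orbit must in fact lie in $\Gamma_f$, that is, to upgrade the subsequential vanishing of $l(\gamma_\infty, \sigma_f^{n_k}(\tau))$ to full vanishing as $n \to \infty$. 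Once obtained, this contradicts $\gamma_\infty \notin \Gamma_f$. The upgrading step will combine the complete invariance of $\Gamma_f$ (Proposition~\ref{prop:simple}), the iterate invariance $\Gamma_f = \Gamma_{f^r}$ (Proposition~\ref{prop:iterate}), and the weak contraction of $\sigma_f$ together with the $1$-Lipschitz property of $\log l(\gamma, \cdot)$.

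Once that key upgrade is in place, the uniform bound itself is assembled from standard ingredients: the non-pinched part of each limiting noded surface has discrete length spectrum, and its systole among essential geodesics not homotopic into nodes gives a positive constant $L_0$. Continuity of length functions on $\AT_f$ then transfers this to a uniform lower bound on $l(\gamma, \sigma_f^n(\tau))$ for all large $n$ and every $\gamma \notin \Gamma_f$; taking the minimum over the finitely many early $n$ yields the desired $L(\tau, f)$. The main obstacle I expect is the middle step: propagating a subsequential vanishing event to full-orbit vanishing seems to require that the dynamics of $\sigma_f$ extend continuously to $\AT_f$ in a way that preserves pinched multicurves, a result that plausibly builds on the techniques of \cite{S11}.
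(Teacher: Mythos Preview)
The paper does not contain a proof of this theorem: it is stated together with Theorem~\ref{thm:CanonicalObstruction} as a result ``due to Kevin Pilgrim \cite{P}'', and then immediately reformulated as Theorem~\ref{thm:pilgrims}, also without proof. So there is no proof here to compare against; the paper merely quotes the result.

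That said, your proposal has two concrete problems worth flagging. First, the compactness step is misstated: $\AT_f$ is not compact (not even locally compact), so compactness of $\AM_f$ cannot produce a convergent subsequence of $\sigma_f^{n_k}(\tau)$ in $\AT_f$. What it gives is convergence of the \emph{projections} $\pi(\sigma_f^{n_k}(\tau))$ in $\AM_f$, and on the moduli level the homotopy class $\gamma_\infty$ loses its meaning --- only its mapping-class-group orbit survives. One must either lift carefully (and then the limit will typically not exist in $\AT_f$) or work in the intermediate space $\AM'_f$ of Proposition~\ref{prop:covers}.

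Second, and more seriously, the ``upgrade'' step you isolate is not a technical wrinkle but the entire content of the theorem, and the ingredients you list do not yield it. Weak contraction plus the Lipschitz bound on $\log l(\gamma,\cdot)$ only says that $l(\gamma,\tau_{n+1})/l(\gamma,\tau_n)$ is bounded between $e^{-D}$ and $e^{D}$; this does nothing to prevent $l(\gamma,\tau_n)$ from oscillating between $0$ and a fixed positive level along long stretches. Neither complete invariance of $\Gamma_f$ nor $\Gamma_f=\Gamma_{f^r}$ speaks to curves \emph{outside} $\Gamma_f$. Pilgrim's actual argument in \cite{P} proceeds by a delicate quantitative analysis of how short curves interact under pullback (using the Gr\"otzsch inequality and estimates on moduli of annuli under branched covers), showing directly that once a curve not in the canonical obstruction is shorter than a definite threshold it must in fact get longer under further iteration. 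Your outline does not supply a mechanism of this kind, and invoking ``continuous extension of $\sigma_f$ to $\AT_f$'' (which is Theorem~\ref{thm:extensionA}) would make the logic circular, since the proof in \cite{S11} uses Theorem~\ref{thm:curvebound}.
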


Recall that the augmented \Teich space $\AT(S)$ is the space of all stable  Riemann surfaces with marked points \emph{with nodes} of the same type as $S$ (see, for example, \cite{W1,W2}). The \emph{type} of a nodal surface is defined by its topological type (more precisely, by the topological type  of a  surface one obtains by opening up all nodes) and the number of marked points (excluding nodes).  The augmented \Teich space $\AT_f$ is a stratified space with strata corresponding to multicurves on $(\Sphere,Q_f)$. We denote by $\S_\Gamma$ the stratum corresponding to the multicurve $\Gamma$, i.e., the set of all nodal surfaces for which the nodes come from pinching all elements of $\Gamma$ and for which there are no other nodes. In particular, $\T_f = \S_\emptyset$.  Strata of $\AM_f$ are labeled by equivalence classes $[\Gamma]$ of multicurves, where two multicurves $\Gamma_1$ and $\Gamma_2$ are in the same class if and only if one can be transformed to the other by an element of the pure mapping class group or, equivalently, if the respective elements of $\Gamma_1$ and $\Gamma_2$ separate points of $Q_f$ in the same way.

In \cite{S11}, the author proves the following statement.

\begin{theorem} 
\label{thm:extensionA} Thurston's pullback map extends continuously to a self-map of the augmented \Teich space.
\end{theorem}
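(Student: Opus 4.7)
The plan is to analyze what happens to $\sigma_f(\tau_n)$ as a sequence $\tau_n \to \tau_\infty \in \S_\Gamma \subset \AT_f$, identify a natural candidate for the limit, and verify that the extension so defined is continuous.

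First I would define the candidate extension. A point $\tau_\infty \in \S_\Gamma$ is represented by a noded Riemann surface whose components are the connected components of $(\Sphere \sm \Gamma, P_f)$, equipped with complex structures and with punctures where curves of $\Gamma$ were pinched. Let $\Gamma' \subset f^{-1}(\Gamma)$ be the multicurve of essential preimages (non-essential components bound discs with at most one marked point and can be filled in). Pulling back, component by component, the complex structure from $\tau_\infty$ via the appropriate branched restrictions of $f$ yields a noded surface in $\S_{\Gamma'}$, which I denote $\bar\sigma_f(\tau_\infty)$.

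Next I would verify the length/modulus behavior. By the definition of the augmented \Teich topology, $\tau_n \to \tau_\infty$ implies $l(\gamma,\tau_n)\to 0$ for every $\gamma\in\Gamma$, and by the Collar Lemma stated in Section~\ref{sec:basic} we get $M(\gamma,\tau_n)\to\infty$. For each essential preimage $\gamma' \in \Gamma'$ with $f(\gamma')=\gamma$ and $\deg(f|_{\gamma'})=d$, the preimage of a round annulus of modulus $M$ around $\gamma$ contains a round annulus of modulus $M/d$ around $\gamma'$ with respect to the structure $\sigma_f(\tau_n)$; hence $l(\gamma',\sigma_f(\tau_n))\to 0$. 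Conversely, if $\gamma''$ is any essential curve not homotopic to any element of $\Gamma'$, then $f(\gamma'')$ is not homotopic to any $\gamma\in\Gamma$, and because $\tau_n\to\tau_\infty$ along $\S_\Gamma$ the length $l(f(\gamma''),\tau_n)$ stays bounded above and away from zero; pulling back annuli again, $l(\gamma'',\sigma_f(\tau_n))$ is bounded between positive constants. Thus the stratum of any accumulation point of $\sigma_f(\tau_n)$ is exactly $\S_{\Gamma'}$.

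Finally I would address the convergence of the complex-structure data on the thick parts. Normalizing representatives $h_{\tau_n}$ so that they agree with the hyperbolic thick–thin decomposition along $\Gamma$, the Beltrami differentials $f^*h_{\tau_n}^*\mu_0$ converge in $L^\infty_{\mathrm{loc}}$ on the complement of small collars around $\Gamma'$, because $f$ is smooth on that complement and pulls back convergent complex structures to convergent complex structures. Applying the parameter-dependent Measurable Riemann Mapping Theorem component by component, the normalized solutions $h_1^{(n)}$ converge uniformly on compact subsets of the complement of $\Gamma'$ to a representative of $\bar\sigma_f(\tau_\infty)$, which together with the modulus control above yields convergence in the augmented \Teich topology.

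The main obstacle is the behavior inside the degenerating collars: one must show that each long thin annulus around a curve $\gamma\in\Gamma$ in $\tau_n$ pulls back, for $n$ large, to exactly the collection of long thin collars predicted by $\Gamma'$, with the correct asymptotic shape of a punctured disc in appropriate coordinates, and that no additional homotopy class develops a collar of modulus tending to infinity. This requires a careful Gr\"otzsch/modulus comparison to rule out ``parasitic'' degenerations, together with an argument that the normalization of $h_1^{(n)}$ does not drift relative to the noded limit. Once these local collar estimates are in place, the limit $\bar\sigma_f(\tau_\infty)$ depends only on $\tau_\infty$ and not on the approximating sequence, and the extension is continuous.
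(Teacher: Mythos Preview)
The paper does not contain a proof of this theorem: it is stated with the attribution ``In \cite{S11}, the author proves the following statement,'' and no argument is given here. Consequently there is nothing in the present paper to compare your proposal against.

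That said, a brief comment on the proposal itself. Your overall strategy---define the extension stratum by stratum, show the degenerating curves under $\sigma_f$ are exactly $\Gamma'=f^{-1}(\Gamma)$, and then control the complex structure on the thick part---is the natural one and is essentially the approach taken in \cite{S11}. The one place where your sketch is too quick is the sentence ``if $\gamma''$ is any essential curve not homotopic to any element of $\Gamma'$, then $f(\gamma'')$ is not homotopic to any $\gamma\in\Gamma$.'' The image $f(\gamma'')$ need not be simple, and even when it is, this formulation of the contrapositive is not what you want; what you need is that the modulus of any annulus homotopic to $\gamma''$ in $\sigma_f(\tau_n)$ stays bounded, which one obtains by mapping such an annulus forward by $f_{\tau_n}$ and using that the image annulus (of the same modulus) lives in the thick part of $\tau_n$. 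You correctly identify this ``no parasitic degeneration'' step as the main obstacle at the end, but the earlier paragraph asserts it as if already proved. With that gap acknowledged rather than asserted, your outline is a faithful sketch of the argument in \cite{S11}.
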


\begin{remark} Theorems \ref{thm:CanonicalObstruction} and \ref{thm:curvebound} can be reformulated as follows. Note that this version does not require that the Thurston map $f$ have hyperbolic orbifold.
\end{remark}
\begin{theorem}
\label{thm:pilgrims}
The accumulation set of $\pi(\sigma_f^n(\tau))$ in the compactified moduli space $\AM_f$  is a compact subset of  $\S_{[\Gamma_f]}$. %If $\Gamma_f$ is not empty then the sequence $\sigma_f^n(\tau)$ tends to $\S_{\Gamma_f}$. 
\end{theorem}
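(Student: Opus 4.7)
The plan is to read Theorem~\ref{thm:pilgrims} as a direct consequence of the definition of $\Gamma_f$ together with Theorem~\ref{thm:curvebound}, translated into the stratified topology of the augmented \Teich and moduli spaces.

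First, I extract the length data. By the definition of the canonical obstruction, $l(\gamma,\sigma_f^n(\tau)) \to 0$ for every $\gamma \in \Gamma_f$. Via the Collar Lemma recalled in Section~\ref{sec:Teich}, the modulus of the widest annulus homotopic to $\gamma$ on $\sigma_f^n(\tau)$ tends to infinity, which is precisely what it means for the specific curve $\gamma$ to be pinched in the $\AT_f$-sense. Conversely, Theorem~\ref{thm:curvebound} supplies a uniform lower bound $L > 0$ with $l(\gamma,\sigma_f^n(\tau)) \ge L$ for every essential $\gamma \notin \Gamma_f$, so no curve outside $\Gamma_f$ can be pinched along the sequence.

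Second, I pass to the augmented moduli space, which is compact as the Deligne--Mumford compactification of the moduli space of a sphere with $p_f$ marked points. Every subsequence of $\pi(\sigma_f^n(\tau))$ therefore has a further subsequence converging to some stable noded surface $X_\infty \in \AM_f$. A class of curves is pinched in $X_\infty$ exactly when its length along the subsequence tends to zero; by the previous paragraph the pinched multicurve of $X_\infty$ is precisely the equivalence class of $\Gamma_f$, so $X_\infty \in \S_{[\Gamma_f]}$. Hence the accumulation set of $\pi(\sigma_f^n(\tau))$ in $\AM_f$ is contained in $\S_{[\Gamma_f]}$; being closed in the compact space $\AM_f$ it is compact and nonempty.

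Third, I upgrade the second claim to the non-compact space $\AT_f$. A neighborhood basis of $\S_{\Gamma_f}$ in $\AT_f$ is given by sets of the form $\{\tau' : l(\gamma,\tau') < \eps \text{ for all } \gamma \in \Gamma_f\}$. Since the specific curves of $\Gamma_f$ (and not just representatives of their mapping class orbits) have vanishing length along $\sigma_f^n(\tau)$, the sequence eventually lies in every such neighborhood; this is exactly the statement $\sigma_f^n(\tau) \to \S_{\Gamma_f}$.

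The principal subtlety is the interplay between $\AT_f$ and $\AM_f$: one cannot phrase the first part in $\AT_f$ since that space is not compact and the sequence need not have $\AT_f$-accumulation points, yet one must retain in the second part the sharper information that the pinched multicurve is $\Gamma_f$ itself and not merely a representative of $[\Gamma_f]$. The key point is that Theorem~\ref{thm:curvebound} quantifies over every essential curve and therefore uniformly rules out, across the whole mapping class group orbit, any pinching outside $\Gamma_f$; this matches the specific-curve information coming from the definition of $\Gamma_f$.
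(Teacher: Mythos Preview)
Your argument is correct and is precisely the reformulation the paper has in mind: the paper does not supply a separate proof of Theorem~\ref{thm:pilgrims} but presents it (see the Remark immediately preceding it) as a restatement of Theorems~\ref{thm:CanonicalObstruction} and~\ref{thm:curvebound} in the language of the stratified spaces $\AT_f$ and $\AM_f$, and you have spelled out exactly that translation. The one point worth tightening is your neighborhood-basis claim: the sets $\{\tau' : l(\gamma,\tau') < \eps \text{ for all } \gamma \in \Gamma_f\}$ form a basis of neighborhoods of the \emph{closure} $\overline{\S_{\Gamma_f}}$ rather than of $\S_{\Gamma_f}$ itself, but this is harmless since every open neighborhood of $\S_{\Gamma_f}$ contains one of these sets, and Theorem~\ref{thm:curvebound} already rules out accumulation on any deeper stratum.
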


We will need the following proposition (compare  \cite[Lemma 5.2]{DH}).
\begin{proposition}
\label{prop:covers}
 There exists an intermediate cover $\M'_f$ of $\M_f$ (so that
  $\T_f \stackrel{\pi_1}{\longrightarrow} \M'_f \stackrel{\pi_2}{\longrightarrow} \M_f$ 
 are covers and $\pi_2\circ\pi_1=\pi$) such that
 \begin{itemize}
	\item[i.] $\pi_2$ is finite,
	\item[ii.]  the diagram
\begin{equation}
\label{diag:covers}
\begin{diagram}
\node{\T_f} \arrow[2]{e,t}{\sigma_f} \arrow{se,l}{\pi_1} \arrow[2]{s,l}{\pi} \node[2]{\T_f}
\arrow[2]{s,l}{\pi}
\\
\node[2]{\M'_f} \arrow{se,t}{\sigmat_f} \arrow{sw,t}{\pi_2}
\\
\node{\M_f}  \node[2]{\M_f}
\end{diagram}
\end{equation}
 commutes for some map ${\sigmat_f} \colon \M'_f \to
 \M_f$,
 \item[iii.] If  $\pi_1(\tau_1)=\pi_1(\tau_2)$ then $f_{\tau_1}=f_{\tau_2}$ up to pre- and post-composition by Moebius transformations.

\end{itemize}

\end{proposition}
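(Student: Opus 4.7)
The plan is to produce $\M'_f$ as the quotient of $\T_f$ by a suitable finite-index subgroup $H$ of the pure mapping class group $\text{PMCG}(\Sphere, P_f)$. I would take $H$ to be the set of mapping classes $[\phi]$ admitting a homeomorphism $\tilde{\phi}$ of $\Sphere$ such that $f \circ \tilde{\phi} = \phi \circ f$, $\tilde{\phi}(P_f) = P_f$, and $\tilde{\phi}|_{P_f} = \id$. This is a subgroup: composing two such pairs $(\phi_i, \tilde{\phi}_i)$ produces $(\phi_1\phi_2, \tilde{\phi}_1\tilde{\phi}_2)$ with the same properties.

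The next step is to establish $[\text{PMCG}(\Sphere, P_f) : H] < \infty$, which breaks into two classical finite-index arguments. First, the subgroup $H_0$ of classes $[\phi]$ admitting \emph{any} lift (a priori only in $\text{MCG}(\Sphere, f^{-1}(P_f))$) is finite-index: the unbranched covering $f\colon \Sphere \sm f^{-1}(P_f) \to \Sphere \sm P_f$ corresponds to the subgroup $K = f_* \pi_1(\Sphere \sm f^{-1}(P_f))$ of index $d_f$ in $\pi_1(\Sphere \sm P_f)$, a lift of $\phi$ exists precisely when $\phi_*$ preserves the conjugacy class of $K$, and the set of index-$d_f$ subgroups of $\pi_1(\Sphere \sm P_f)$ is finite. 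Second, each lift $\tilde\phi$ permutes the finite set $f^{-1}(P_f)$, and the additional requirements $\tilde{\phi}(P_f) = P_f$ together with $\tilde{\phi}|_{P_f} = \id$ cut out the kernel of a homomorphism from $H_0$ into the (finite) symmetric group on $f^{-1}(P_f)$.

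Setting $\M'_f = \T_f/H$ yields the factorization $\pi = \pi_2 \circ \pi_1$, where $\pi_1$ is the $H$-quotient and $\pi_2$ is the quotient by the finite group $\text{PMCG}/H$, proving (i). For (ii), I would verify that $\sigma_f$ sends $H$-orbits to $\text{PMCG}$-orbits: writing the Teichm\"uller action as $[\phi]\cdot\tau = \langle h_\tau \circ \phi\rangle$, the relation $f \circ \tilde\phi = \phi \circ f$ gives $f^* \phi^* = \tilde\phi^* f^*$ on almost-complex structures, and a direct computation from Diagram~(\ref{dg2}) shows $\sigma_f([\phi]\cdot\tau) = [\tilde\phi]\cdot\sigma_f(\tau)$; since $\tilde\phi|_{P_f} = \id$ places $[\tilde\phi] \in \text{PMCG}$, the two points agree in $\M_f$. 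This produces $\sigmat_f\colon \M'_f \to \M_f$ and makes Diagram~(\ref{diag:covers}) commute.

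For (iii), suppose $\pi_1(\tau_1)=\pi_1(\tau_2)$, and write $\tau_2 = [\phi]\cdot\tau_1$ for some $[\phi]\in H$ with lift $\tilde\phi$. Choosing compatible representatives $h_{\tau_2} = h_{\tau_1}\circ\phi$ and $h_1^{\tau_2} = h_1^{\tau_1}\circ\tilde\phi$ in Diagram~(\ref{dg2}) and using $\phi\circ f = f\circ\tilde\phi$, we compute
\[
f_{\tau_2} = h_{\tau_2}\circ f\circ (h_1^{\tau_2})^{-1} = h_{\tau_1}\circ\phi\circ f\circ\tilde\phi^{-1}\circ (h_1^{\tau_1})^{-1} = h_{\tau_1}\circ f\circ (h_1^{\tau_1})^{-1} = f_{\tau_1},
\]
so $f_{\tau_1}$ and $f_{\tau_2}$ agree up to the pre- and post-composition Moebius ambiguity built into the construction. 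The main delicate point is tracking this Moebius freedom consistently on both sides of Diagram~(\ref{dg2}) under the lift $\tilde\phi$, and checking that the combinatorial conditions defining $H$ really capture all cases in which $f_\tau$ is well-defined modulo Moebius; the finite-index arguments are otherwise standard covering-theoretic facts.
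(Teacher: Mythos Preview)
The paper does not actually prove this proposition; it merely cites \cite[Lemma~5.2]{DH} and then records the consequence that $\M'_f$ is the quotient of $\T_f$ by a finite-index subgroup of the pure mapping class group. Your argument is precisely the standard construction underlying that lemma, and the overall strategy---define $H$ via liftable mapping classes, check finite index, verify equivariance of $\sigma_f$---is correct.

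There is one imprecision worth flagging. You write that the conditions $\tilde\phi(P_f)=P_f$ and $\tilde\phi|_{P_f}=\id$ ``cut out the kernel of a homomorphism from $H_0$ into the (finite) symmetric group on $f^{-1}(P_f)$.'' This is not literally true: the lift $\tilde\phi$ of a given $\phi$ is determined only up to composition with a deck transformation of $f$, so there is no well-defined map $H_0\to\mathrm{Sym}(f^{-1}(P_f))$. The clean fix is to pass to the group $\widetilde H_0$ of pairs $([\phi],[\tilde\phi])$ with $\tilde\phi$ a chosen lift (taken as a class in $\mathrm{MCG}(\Sphere,f^{-1}(P_f))$); this \emph{does} map homomorphically to $\mathrm{Sym}(f^{-1}(P_f))$, its projection to $H_0$ is surjective with kernel the finite deck group, and hence the image in $H_0$ of the kernel of the permutation map is still finite index. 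Your subgroup $H$ contains this image, so the conclusion stands. With this adjustment the proof is complete, and your verifications of (ii) and (iii) via the identity $\sigma_f([\phi]\cdot\tau)=[\tilde\phi]\cdot\sigma_f(\tau)$ are exactly right.
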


Note that $\M'_f$ is a quotient of $\T_f$ by a subgroup $G$ of the pure mapping class group of finite index. Then the quotient $\AM'_f$ of $\AT_f$ by the same subgroup will be a compactification of $\M'_f$. The covers $\pi_1$ and $\pi_2$ can be extended to  the corresponding augmented spaces so that $\pi_2\circ\pi_1=\pi$ still holds. As in the case of the compactified moduli space, we parametrize boundary strata by equivalence classes of multicurves, two classes of simpled closed curves being equivalent if one can be mapped to the other by the action of $G$. Evidently, the whole diagram above also extends to the augmented spaces.

\section{Thurston maps with parabolic orbifolds}
\label{sec:parabolic}

A complete classification of postcritically finite branched covers (i.e. Thurston maps $(f,Q_f)$, where $Q_f$ is equal to the postcritical set $\PP$) with parabolic orbifolds has been given in \cite{DH}. All rational functions that are postcritically finite branched covers with parabolic orbifold has been extensively described in \cite{milnorlattes}. However, no classification has been developed yet for general Thurston maps. In this section, we remind the reader of basic results on Thurston maps with parabolic orbifolds.

Recall that a map $f \colon (S_1,v_1) \to (S_2,v_2)$ is a covering map of orbifolds if $v_1(x)\deg_x f = v_2(f(x))$ for any $x \in S_1$. The following proposition from \cite{DH} is crucial.

\begin{proposition} 
\begin{itemize}
	\item[i.] 
If $f \colon \Sphere \to \Sphere$ is a postcritically finite branched cover, then $\chi(O_f) \le 0$.
\item[ii.] If $\chi(O_f) = 0$, then $f \colon O_f \to O_f$ is a covering map of orbifolds.
\end{itemize}
\end{proposition}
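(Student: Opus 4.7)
The plan is to construct an auxiliary orbifold $O'_f$ on $\Sphere$ for which $f$ becomes an orbifold covering map, so that Riemann--Hurwitz yields $\chi(O'_f) = d_f\chi(O_f)$, and then to compare $O'_f$ with $O_f$ using condition (ii) in the definition of $v_f$. This buys both inequalities in a single stroke.

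First I would set $v'_f(y) := v_f(f(y))/\deg_y f$ for every $y \in \Sphere$. Condition (ii) says that $v_f(y)\deg_y f$ divides $v_f(f(y))$; in particular $\deg_y f$ divides $v_f(f(y))$, so $v'_f(y) \in \N \cup \{\infty\}$ and $O'_f := (\Sphere, v'_f)$ is a genuine orbifold. By construction $v'_f(y)\deg_y f = v_f(f(y))$, so $f \colon O'_f \to O_f$ is an orbifold covering of degree $d_f$. The orbifold Riemann--Hurwitz formula (or a direct computation using $\sum_{y\in f^{-1}(x)}\deg_y f = d_f$ together with the topological identity $\sum_y(\deg_y f-1)=2(d_f-1)$) then gives $\chi(O'_f) = d_f\chi(O_f)$.

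Next I would compare $\chi(O'_f)$ with $\chi(O_f)$. Reading condition (ii) as \emph{$v_f(y)$ divides $v_f(f(y))/\deg_y f = v'_f(y)$} yields $v_f(y) \le v'_f(y)$ for every $y$. Outside $f^{-1}(P_f)$ both values equal $1$. For $y \in P_f$, which lies in $f^{-1}(P_f)$ by forward invariance, the inequality $v_f(y)\le v'_f(y)$ gives $1-1/v_f(y)\le 1-1/v'_f(y)$; for $y \in f^{-1}(P_f) \sm P_f$ the term $1-1/v_f(y)$ vanishes while $1-1/v'_f(y)\ge 0$. Summing over $\Sphere$ produces $\chi(O'_f) \le \chi(O_f)$, and combining with the Riemann--Hurwitz identity gives $(d_f-1)\chi(O_f)\le 0$. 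Since $d_f \ge 2$, this proves $\chi(O_f) \le 0$, settling (i).

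For (ii), the hypothesis $\chi(O_f) = 0$ forces $\chi(O'_f) = d_f\chi(O_f) = 0 = \chi(O_f)$, so every inequality in the previous paragraph becomes an equality; this requires $v_f(y) = v'_f(y)$ for all $y \in \Sphere$, which is exactly the orbifold-covering condition $v_f(y)\deg_y f = v_f(f(y))$. The only slightly delicate point is verifying that $v'_f$ is integer-valued, but this is immediate from (ii); the rest is bookkeeping with Riemann--Hurwitz and presents no serious obstacle.
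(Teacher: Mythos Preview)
Your argument is correct. The paper itself does not prove this proposition; it merely quotes it from Douady--Hubbard \cite{DH}, so there is no in-paper proof to compare against. That said, what you wrote is essentially the classical argument given in \cite{DH}: pull back $v_f$ through $f$ to obtain an auxiliary orbifold $O'_f$ which $f$ covers by construction, apply orbifold Riemann--Hurwitz to get $\chi(O'_f)=d_f\chi(O_f)$, and use the minimality condition (ii) on $v_f$ to obtain $\chi(O'_f)\le\chi(O_f)$ with equality precisely when $v_f=v'_f$. All the small checks you flag (integrality of $v'_f$, the inclusion $P_f\subset f^{-1}(P_f)$, the vanishing of terms off $f^{-1}(P_f)$) are handled correctly.
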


Equation~(\ref{eq:euler}) gives six possibilities for $\chi(O_f)=0$. If we record all the values of $v_f$ that are bigger than 1, we get one of the following orbifold signatures.

\begin{itemize}
	\item $(\infty,\infty),$
	\item $(2,2,\infty),$
	\item $(2,4,4),$
	\item $(2,3,6),$
	\item $(3,3,3),$
	\item $(2,2,2,2).$
\end{itemize}

%Since $\chi(O_f)=0$, all of these manifolds can be covered by standard plane.
We are mostly interested in the last case. A \emph{$(2,2,2,2)$-map} is a Thurston map that has orbifold with signature $(2,2,2,2)$. From now on in this section, we always assume that $f$ is a $(2,2,2,2)$-map. An orbifold with signature $(2,2,2,2)$ is a quotient of a torus $T$ by an involution $i$; the four fixed points of the involution $i$ correspond to the points with ramification weight 2 on the orbifold. Denote by $p$ the corresponding branched cover from $T$ to $\Sphere$; it has exactly 4 simple critical points which are the fixed points of $i$. It follows that $f$ can be lifted to a covering self-map $\fh$ of $T$ (see \cite{DH}). For a   $(2,2,2,2)$-map $f=(f,Q_f)$, we will denote by $F=(f,P_f)$ the corresponding $(2,2,2,2)$-map with exactly 4 postcritical points in the marked set.

 %We define the map $i^*$ from the set of all homotopy classes of simple closed curves on $\Sphere \setminus Q_f$ to $H_1(T,\Z)$ as follows. 
Take any simple closed  curve $\gamma$ on $\Sphere \setminus \PP$. Then $p^{-1}(\gamma)$ has either one or two components that are simple closed curves. %Since $\gamma$ separates $\Sphere$ into two connected components, it follows that, in the latter case, the preimage of $\gamma$ must be trivial in $H_1(T,\Z)$.

\begin{proposition}
 If there are exactly two postcritical points of $f$ in each complementary component of $\gamma$, then the $p$-preimage of $\gamma$ consists of two components that are homotopic in $T$ and non-trivial in $H_1(T,\Z)$. Otherwise, all preimages of $\gamma$ are trivial.
 \label{prop:trivialhom}
\end{proposition}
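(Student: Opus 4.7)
The plan is to analyze $p^{-1}(\gamma)$ via the monodromy of the unramified double cover $p\colon T \sm p^{-1}(\PP) \to \Sphere \sm \PP$. This cover is classified by a homomorphism $\mu\colon \pi_1(\Sphere \sm \PP) \to \Z/2$, and since $p$ is branched precisely at the four points of $\PP$ (each with local degree $2$), the map $\mu$ sends each small loop around a point of $\PP$ to the nontrivial element of $\Z/2$. If $\gamma$ separates $\Sphere$ into two disks $D_1, D_2$ containing $k$ and $4-k$ postcritical points respectively, then $\mu([\gamma]) = k \pmod{2}$, so $\gamma$ lifts to two components when $k$ is even and to a single connected curve double-covering $\gamma$ when $k$ is odd.

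Next I would dispose of the cases $k \in \{0,1,3,4\}$, the ``otherwise'' case. For $k=0$ (or $k=4$), $\gamma$ bounds a disk $D$ in $\Sphere \sm \PP$, and each of its two lifts bounds a copy of $D$ inside $T \sm p^{-1}(\PP)$; in particular both are null-homotopic. For $k=1$ (with $k=3$ symmetric), the unique lift $\tilde\gamma$ bounds $p^{-1}(D_1)$, which is a double cover of a disk branched at one interior point. By Riemann--Hurwitz this preimage has Euler characteristic $2\cdot 1 - 1 = 1$ with one boundary component, so it is itself a disk, and $\tilde\gamma$ is null-homotopic, hence trivial in $H_1(T,\Z)$.

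For the main case $k=2$, each disk $D_i$ contains two branch values, so $p^{-1}(D_i)$ is a connected double cover branched at two points. Riemann--Hurwitz gives Euler characteristic $2 - 2 = 0$, and since $\partial D_i = \gamma$ lifts to two circles, the preimage $p^{-1}(D_i)$ is an annulus $A_i$. Thus $T = A_1 \cup A_2$ glued along the two components $\tilde\gamma_1, \tilde\gamma_2$ of $p^{-1}(\gamma)$, with each $\tilde\gamma_j$ appearing as one boundary circle of both $A_1$ and $A_2$. Removing a single $\tilde\gamma_j$ from $T$ leaves the other $\tilde\gamma_i$ to connect $A_1$ and $A_2$, so $\tilde\gamma_j$ is non-separating and therefore represents a nonzero class in $H_1(T,\Z)$. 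Finally, the two disjoint essential simple closed curves $\tilde\gamma_1, \tilde\gamma_2$ on the torus cobound the annulus $A_1$, so they are freely homotopic in $T$.

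The argument poses no deep obstacle; the only subtlety is the bookkeeping in the $k=2$ case, where one must verify that $A_1$ and $A_2$ share \emph{both} boundary circles (rather than being joined along a single circle traversed twice). This is forced by the fact that each $\partial A_i$ is exactly $p^{-1}(\gamma)$ and therefore has two components.
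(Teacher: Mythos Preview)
Your proof is correct and takes a genuinely different route from the paper. The paper argues case by case on whether $p^{-1}(\gamma)$ has one or two components: if one, it must separate $T$ and hence be contractible, and counting the sheets over the bounded disk shows the corresponding side of $\gamma$ contains exactly one postcritical point; if two, the involution $i$ exchanges them, so either both are contractible (forcing zero postcritical points on one side) or both are nontrivial and homotopic (in which case the complement consists of two annuli, each mapping with degree two, forcing two postcritical points on each side). Your argument instead reads off the number of preimage components from the monodromy $\mu([\gamma]) \equiv k \pmod 2$, then applies Riemann--Hurwitz to identify $p^{-1}(D_i)$ as a disk or an annulus according to the number of branch values inside. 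Your approach is more systematic and would generalize cleanly to other branched covers; the paper's is more hands-on and bypasses Euler-characteristic bookkeeping by exploiting the involution $i$. One small point worth a half-sentence: in the $k=2$ case you assert that $p^{-1}(D_i)$ is connected without justification --- this is immediate since a degree-$2$ cover of a disk with at least one interior branch point cannot be trivial, but stating it explicitly would close the only gap a careful reader might notice.
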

\begin{proof} Note that postcritical points of $f$ are, by definition, the fixed points of involution $i$ and critical values of $p$. Since $\gamma$ separates $\Sphere$ into two connected components, if $\gamma$ has exactly one preimage $\alpha$, then $\alpha$ must separate $T$ into two components and, thus, be contractible. The disk bounded by $\alpha$ in $T$ is mapped to one of the connected components of the complement of $\gamma$ with degree two. Therefore, there exists exactly one critical point of $p$ is this disk, and, hence, exactly one postcritical point in its image.

If $\gamma$ has two preimages, then they do not intersect, which implies that they are homotopic to each other or at least one of them is contractible. However, if one of the preimages is contractible, the other one also is, since they are mapped to each other by $i$. In this case, the disk bounded by a contractible preimage component is mapped by $f$ one-to-one to a component of the complement of $\gamma$. Thus, this complementary component has no postcritical points.

The last case is when $\gamma$ has two preimages that are homotopic to each other and are not trivial in $H_1(T,\Z)$. Then the complement of the full preimage of $\gamma$  has two components that are annuli that are mapped by $p$ to complementary components of $\gamma$ with degree two. It follows that there are exactly two postcritical points in each of these components.
\end{proof}

Every homotopy class of simple closed curves $\alpha$ on $T$ defines, up to sign, an element $\langle \alpha \rangle$ of $H_1(T,\Z)$. If a simple closed curve $\gamma$ on $\Sphere \setminus \PP$ has two $p$-preimages, then they are homotopic by the previous proposition. Therefore, every  homotopy class of simple closed curves $\gamma$ on $\Sphere \setminus \PP$ also defines, up to sign, an element $\langle \gamma \rangle$ of $H_1(T,\Z)$. It is clear that for any $h \in H_1(T,\Z)$ there exists a homotopy class of simple closed curves $\gamma$ such that $h=n \langle \gamma \rangle$ for some $n \in \Z$.

Since $H_1(T,\Z)\cong \Z^2$, the push-forward operator $\fh_*$ is a linear operator. It is easy to see that the determinant of $\fh_*$ is equal to the degree of $\fh$, which is in turn equal to the degree of $f$. Existence of invariant multicurves for $f$ is related to the action of $\fh_*$ on $H_1(T,\Z)$. 

\begin{proposition} 
\label{prop:homology1} Suppose that a component $\gamma'$ of the $f$-preimage of a simple closed curve $\gamma$ on $\Sphere \setminus \PP$  is homotopic to $\gamma$ relative to $\PP$. Take a $p$-preimage $\alpha$ of $\gamma$. Then $\fh_*(\langle\alpha\rangle)=\pm d \langle\alpha\rangle$, where $d$ is the degree of $f$ restricted to  $\gamma'$.
\end{proposition}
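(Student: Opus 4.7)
The plan is to lift the homotopy relation $\gamma' \simeq \gamma$ from $\Sphere \setminus \PP$ to the torus $T$ via $p$, and then read off the action of $\fh$ on $\langle\alpha\rangle$ from the commutative square
\[
\begin{diagram}
\node{T} \arrow{e,t}{\fh} \arrow{s,l}{p} \node{T} \arrow{s,l}{p} \\
\node{\Sphere} \arrow{e,t}{f} \node{\Sphere.}
\end{diagram}
\]

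First I would observe that since $\alpha$ is a $p$-preimage of $\gamma$ and $p^{-1}(\gamma)$ has two components (else $\langle\alpha\rangle$ would be trivial in $H_1(T,\Z)$ by Proposition~\ref{prop:trivialhom} and there would be nothing to prove), $\gamma$ must have exactly two postcritical points in each complementary component. Because $\gamma' \simeq \gamma$ in $\Sphere \setminus \PP$, the curve $\gamma'$ separates the postcritical set in the same way, so Proposition~\ref{prop:trivialhom} also applies to $\gamma'$: its $p$-preimage consists of two simple closed curves in $T$ that are homotopic and non-trivial in $H_1(T,\Z)$. Let $\alpha'$ be one of these two preimage components of $\gamma'$; then the homotopy $\gamma' \simeq \gamma$ lifts (after possibly swapping the two preimage components) to a homotopy $\alpha' \simeq \alpha$ in $T$, and in particular $\langle\alpha'\rangle = \pm \langle\alpha\rangle$ in $H_1(T,\Z)$.

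Next I would identify $\fh(\alpha')$. Using the commutative square,
\[
p(\fh(\alpha')) = f(p(\alpha')) = f(\gamma') = \gamma,
\]
so $\fh(\alpha')$ is a connected subset of $p^{-1}(\gamma)$, hence equals one of the two components of $p^{-1}(\gamma)$. After possibly replacing $\alpha$ by its $i$-image (which does not change $\langle\alpha\rangle$ up to sign), we may assume $\fh(\alpha') = \alpha$, so that $\fh|_{\alpha'}\colon \alpha' \to \alpha$ is a covering map of some degree $d'$. Computing degrees in the commutative square along $\alpha'$, and using that $p$ restricted to any component of $p^{-1}(\gamma)$ or $p^{-1}(\gamma')$ has degree one (each such curve has two preimages under the degree-two cover $p$), one finds
\[
d' \;=\; \deg(p|_\alpha)\cdot d' \;=\; \deg(p \circ \fh|_{\alpha'}) \;=\; \deg(f \circ p|_{\alpha'}) \;=\; d \cdot \deg(p|_{\alpha'}) \;=\; d.
\]

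Combining the two steps gives $\fh_*(\langle\alpha'\rangle) = d\langle\alpha\rangle$, and substituting $\langle\alpha'\rangle = \pm\langle\alpha\rangle$ yields $\fh_*(\langle\alpha\rangle) = \pm d\,\langle\alpha\rangle$, as claimed. The main obstacle I anticipate is the bookkeeping that justifies the sign ambiguity cleanly: one must be careful that choices of $\alpha$ among the two components of $p^{-1}(\gamma)$ and orientations of these curves are all absorbed into the $\pm$, and that the case where $\gamma$ has only one $p$-preimage is either excluded a priori or handled trivially (since then $\langle\alpha\rangle = 0$ and the statement is vacuous).
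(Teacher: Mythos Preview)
Your proof is correct and follows essentially the same approach as the paper: handle the trivial case $\langle\alpha\rangle=0$, lift the homotopy $\gamma'\simeq\gamma$ through $p$ to get $\langle\alpha'\rangle=\pm\langle\alpha\rangle$, and then use the commutative square $p\circ\fh=f\circ p$ to conclude that $\fh$ maps $\alpha'$ onto (a component of $p^{-1}(\gamma)$ homologous to) $\alpha$ with degree $d$. The only cosmetic difference is that the paper selects $\alpha'$ from the outset as the component of $p^{-1}(\gamma')$ that $\fh$ sends to $\alpha$ with degree $d$, whereas you first pick $\alpha'$ and then verify these two facts; your explicit degree bookkeeping and your remark about the sign/component ambiguity make the same argument a bit more transparent.
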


\begin{proof} By the previous proposition, if $\gamma$ is not essential in $\Sphere \sm \PP$ then $\langle\alpha\rangle=0$ and the claim holds. Otherwise, both $\gamma$ and $\gamma'$ have exactly two components in their $p$-preimages. Denote by $\alpha'$ a component of the $p$-preimage of $\gamma'$ that is mapped to $\alpha$ by $\fh$ with degree $d$. A homotopy between $\gamma$ and $\gamma'$ lifts to a homotopy between $\alpha$ and $\alpha'$ on $T$. Therefore, $\alpha$ and $\alpha'$ define the same (up to sign) element of $H_1(T,\Z)$, i.e. $\langle\alpha'\rangle=\pm \langle\alpha\rangle$. Since $\fh(\alpha')=\alpha$, we get that $\fh_*(\langle\alpha\rangle)=\fh_*(\pm\langle\alpha'\rangle)=\pm d \langle\alpha\rangle$. In particular, this implies that all connected components of the preimage of $\gamma$ map to $\gamma$ with the same degree. 
\end{proof}

More generally, we obtain the following.

\begin{proposition} 
\label{prop:homology} Let $\gamma$ be a simple closed curve  on $\Sphere \setminus \PP$ such that there are two points of the postcritical set $\PP$ in each complementary component of $\gamma$. If all components of the  $f$-preimage of $\gamma$ have zero intersection number  with $\gamma$ in $\Sphere \setminus \PP$, then $\fh_*(\langle\gamma\rangle)=\pm d \langle\gamma\rangle$, where $d$ is the degree of $f$ restricted to any preimage of  $\gamma$.
\end{proposition}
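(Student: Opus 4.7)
The plan is to adapt the argument of Proposition~\ref{prop:homology1} by combining the commuting relation $p \circ \fh = f \circ p$ with an elementary topological observation on the four-punctured sphere. Let $\alpha$ be a component of $p^{-1}(\gamma)$; by the hypothesis that $\gamma$ separates $\PP$ into $2+2$, Proposition~\ref{prop:trivialhom} gives $\langle \alpha \rangle = \pm \langle \gamma \rangle \neq 0$ in $H_1(T,\Z)$.

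The first step is the following topological fact on $\Sphere \setminus \PP$ (a four-punctured sphere): any two disjoint essential simple closed curves are isotopic. Indeed, removing one such curve produces two twice-punctured disks, each of which admits no essential simple closed curve other than a boundary-parallel one. Since each component of $f^{-1}(\gamma)$ has zero intersection number with $\gamma$ in $\Sphere \setminus \PP$ by hypothesis, every such component $\gamma'$ is either isotopic to $\gamma$ in $\Sphere \setminus \PP$ (the essential case) or bounds a disk containing at most one point of $\PP$ (the non-essential case).

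Next, lift to $T$: by Proposition~\ref{prop:trivialhom}, non-essential components have null-homotopic $p$-preimages, while essential components have $p$-preimages that are isotopic in $T$ to $\alpha$ or to $i(\alpha)$. Write $\fh^{-1}(\alpha) = \bigsqcup_i \beta_i$; then each $\langle \beta_i \rangle$ is either $0$ or $\pm \langle \alpha \rangle$. For an essential $\beta_i$, the relation $p \circ \fh = f \circ p$ together with the fact that $p$ is a homeomorphism when restricted to $\beta_i$ and to $\alpha$---which follows from Proposition~\ref{prop:trivialhom} applied to $p(\beta_i)$ and to $\gamma$ respectively---yields $\deg(\fh|_{\beta_i}) = \deg(f|_{p(\beta_i)}) =: d$. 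Pushing forward in homology, $\fh_*(\langle \beta_i \rangle) = d\,\langle \alpha \rangle$, which combined with $\langle \beta_i \rangle = \pm \langle \alpha \rangle$ gives $\fh_*(\langle \alpha \rangle) = \pm d\, \langle \alpha \rangle$. Since the left-hand side is independent of $i$, this forces every essential preimage of $\gamma$ to share the common degree $d$.

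It remains to produce at least one essential preimage, for otherwise all $\langle \beta_i \rangle$ would vanish, contradicting the covering-space transfer identity $\fh_*\bigl(\sum_i \langle \beta_i \rangle\bigr) = \sum_i \deg(\fh|_{\beta_i})\,\langle \alpha \rangle = (\deg \fh)\,\langle \alpha \rangle \neq 0$. The main potential obstacle is the topological claim on the four-punctured sphere; once that is established, the rest is a routine homological computation, with sign bookkeeping handled by orienting $\alpha$ and each $\beta_i$ so that $\fh|_{\beta_i}$ is orientation-preserving.
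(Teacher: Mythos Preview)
Your proof is correct and follows essentially the same approach as the paper: lift to the torus $T$, use that a $p$-lift $\alpha'$ of a preimage $\gamma'$ maps to $\alpha$ with degree $d$ so that $\fh_*(\langle\alpha'\rangle)=\pm d\langle\alpha\rangle$, and then identify $\langle\alpha'\rangle$ with $\pm\langle\alpha\rangle$. The only organizational difference is that you carry out the ``zero intersection implies same class'' step downstairs on the four-punctured sphere (disjoint essential curves are isotopic) and add a transfer argument to guarantee an essential preimage, whereas the paper works directly on $T$, deducing $\langle\alpha'\rangle\neq 0$ from $\fh_*(\langle\alpha'\rangle)=\pm d\langle\alpha\rangle$ and then invoking the intersection form on $H_1(T,\Z)$.
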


\begin{proof} Let $\gamma'$ be a component of the $f$-preimage of $\gamma$. As before, denote by $\alpha'$ a component of the $p$-preimage of $\gamma'$ that is mapped  by $\fh$ to a component $\alpha$ of the $p$-preimage of $\gamma$ with some degree $d$. Then
$\fh_*(\langle\alpha'\rangle)=\pm d \langle\alpha\rangle$. Since $\gamma$ and $\gamma'$ have zero intersection number  in $\Sphere \setminus \PP$, the homotopy classes of $\alpha$ and $\alpha'$ in $T$ also have zero intersection number. By Proposition~\ref{prop:trivialhom}, $\langle\alpha\rangle$ is non-zero in $H_1(T,\Z)$; the last equality implies that $\langle\alpha'\rangle$ is also non-zero in $H_1(T,\Z)$. It follows that $\langle\alpha'\rangle=\pm \langle\alpha\rangle$.
\end{proof}

\section{Topological characterization of canonical obstructions}
\label{sec:canonical}

Recall that the canonical obstruction of a Thurston map $f$ is defined as the set of all homotopy classes of curves for which the length of the corresponding geodesics tends to 0 as we iterate in the \Teich space $T_f$. Notice that this definition makes sense in the case of Thurston maps with parabolic orbifolds. However, only one of the implications in Theorem~\ref{thm:CanonicalObstruction} is true in this setting.

\begin{proposition}
 If a Thurston map $f$ with a parabolic orbifold is Thurston equivalent to a rational map then its canonical obstruction $\Gamma_f$ is empty.
\end{proposition}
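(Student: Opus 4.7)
The plan is to mimic the standard argument from the hyperbolic case, which only uses the existence of a fixed point of the pullback map together with the fact that $\sigma_f$ is weakly contracting in the Teichm\"uller metric; none of these ingredients uses hyperbolicity of the orbifold, so they remain available in the parabolic setting.

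First I would apply Proposition~\ref{prop:fixedpts}: since $f$ is Thurston equivalent to a rational map, there is a point $\tau^*\in\T_f$ with $\sigma_f(\tau^*)=\tau^*$. Now fix any essential simple closed curve $\gamma$ in $(\Sphere,P_f)$ and any base point $\tau\in\T_f$. Combining the fact that $\sigma_f$ is $1$-Lipschitz with the $1$-Lipschitz property of $\log l(\gamma,\cdot)$ on $(\T_f,d_T)$ recalled just before diagram (\ref{dg1}), I would estimate
\[
\bigl|\log l(\gamma,\sigma_f^n(\tau)) - \log l(\gamma,\tau^*)\bigr|
= \bigl|\log l(\gamma,\sigma_f^n(\tau)) - \log l(\gamma,\sigma_f^n(\tau^*))\bigr|
\le d_T(\sigma_f^n(\tau),\sigma_f^n(\tau^*))
\le d_T(\tau,\tau^*).
\]
This gives the uniform lower bound
\[
l(\gamma,\sigma_f^n(\tau)) \ge l(\gamma,\tau^*)\,e^{-d_T(\tau,\tau^*)}
\]
for every $n\ge 0$.

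Because $\tau^*$ is an honest (non-noded) point of $\T_f$, the geodesic length $l(\gamma,\tau^*)$ is strictly positive for every essential simple closed curve $\gamma$. Therefore $l(\gamma,\sigma_f^n(\tau))$ does not tend to zero, which means, by the definition of the canonical obstruction recalled before Proposition~\ref{prop:simple}, that $\gamma\notin\Gamma_f$. Since $\gamma$ was arbitrary, I conclude $\Gamma_f=\emptyset$.

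There is essentially no obstacle here, which is precisely why this direction of Theorem~\ref{thm:CanonicalObstruction} survives when the orbifold is allowed to be parabolic: the argument uses only the weak contraction of $\sigma_f$ and the $1$-Lipschitz control on geodesic lengths, never the strict contraction or the fact that fixed points are unique that come with hyperbolicity. The converse implication is the one that genuinely requires hyperbolic orbifold (as evidenced by the existence of rational Latt\`es-type examples with non-trivial invariant multicurves of eigenvalue $1$), and no such statement is being claimed.
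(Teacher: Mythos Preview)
Your proof is correct and follows essentially the same idea as the paper: use Proposition~\ref{prop:fixedpts} to obtain a fixed point $\tau^*$ of $\sigma_f$, and note that lengths cannot degenerate along the orbit. The paper's argument is even shorter: since the definition of $\Gamma_f$ requires $l(\gamma,\sigma_f^n(\tau))\to 0$ for \emph{some} (equivalently all) $\tau$, one simply takes $\tau=\tau^*$ as the starting point, so the orbit is constant and all lengths are trivially bounded below; your Lipschitz estimate for arbitrary $\tau$ is correct but unnecessary.
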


\begin{proof}
If we start iterating $\sigma_f$ at a fixed point $\tau$ then, obviously, the lengths of all geodesics are uniformly bounded from below.
\end{proof}

%The converse fails, for example, in the case of a 
The other direction of Theorem~\ref{thm:CanonicalObstruction} tells that if for a Thurston map with hyperbolic orbifold the canonical obstruction is empty then there exist no obstructions at all for this map. In the general case, the following is true.

\begin{theorem} Suppose that  the canonical obstruction of  a Thurston map $f$ is empty, and $\Gamma$ is a simple Thurston obstruction for $f$. Then $f$ is a $(2,2,2,2)$-map and every curve of $\Gamma$ has two postcritical points of $f$ in each complementary component.
\label{thm:noncanonical}
\end{theorem}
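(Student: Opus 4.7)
The plan is a case analysis based on the orbifold signature of $f$. First, the hyperbolic case is immediately ruled out: by Theorem~\ref{thm:CanonicalObstruction} the hypothesis $\Gamma_f=\emptyset$ would force $f$ to be Thurston equivalent to a rational map and hence admit no Thurston obstruction at all, contradicting the existence of $\Gamma$. Thus $f$ has parabolic orbifold. Among the six parabolic signatures listed in Section~\ref{sec:parabolic}, all but $(2,2,2,2)$ can be ruled out by lifting to the universal orbifold cover $\mathbb{C}$: in each such case $f$ lifts to an affine self-map $z\mapsto\alpha z+\beta$ with $|\alpha|^2=d_f\ge 2$, and a direct length computation (as in \cite{DH}) shows the Thurston matrix of every multicurve on $(\Sphere,P_f)$ has spectral radius strictly less than $1$, precluding a simple obstruction.

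Next, assume $f$ is a $(2,2,2,2)$-map with torus double cover $p\colon T\to\Sphere$ and unramified lift $\fh\colon T\to T$. As a preliminary step I would establish the following preservation lemma: an essential curve $\gamma$ on $(\Sphere,P_f)$ has two points of $\PP$ in each complementary component if and only if every essential component of $f^{-1}(\gamma)$ does. This is a direct consequence of Proposition~\ref{prop:trivialhom}, which characterizes the ``$2+2$'' property via non-triviality of $\langle\alpha\rangle\in H_1(T,\Z)$ for any $p$-preimage $\alpha$ of $\gamma$, combined with the elementary fact that the unramified self-covering $\fh$ preserves and reflects triviality of homology classes. Partitioning $\Gamma=\Gamma^{2+2}\sqcup\Gamma^{\mathrm{oth}}$ accordingly, this preservation forces $M_\Gamma$ to be block-diagonal with blocks $M_{++}$ and $M_{\mathrm{oo}}$.

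Suppose for contradiction $\Gamma^{\mathrm{oth}}\ne\emptyset$. Proposition~\ref{prop:positive} applied to $\Gamma$ yields a positive vector $v$ with $M_\Gamma v\ge v$; by the block-diagonal form, the corresponding subvector satisfies $M_{\mathrm{oo}}v_{\mathrm{oth}}\ge v_{\mathrm{oth}}>0$, so $\lambda_{\Gamma^{\mathrm{oth}}}\ge 1$. Hence $\Gamma^{\mathrm{oth}}$ is itself a Thurston obstruction and contains a minimal sub-obstruction $\Gamma_0$. Since $\Gamma_f=\emptyset$ and any minimal obstruction with leading eigenvalue strictly greater than $1$ lies in $\Gamma_f$ (noted in the introduction), we must have $\lambda_{\Gamma_0}=1$. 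The main obstacle, and the heart of the proof, is to rule out such a $\Gamma_0$: each of its curves bounds a disk with at most one point of $\PP$ but containing one or more extra marked points from $P_f\setminus\PP$. My plan is to exploit the contraction of $\fh^{-1}$ on contractible loops of $T$ (the $p$-preimages of these curves) together with the Collar Lemma estimate of Section~\ref{sec:Teich} to show that the Thurston pullback forces the moduli of the associated collar annuli on $\sigma_f^n(\tau)$ to grow without bound; then by Theorem~\ref{thm:pilgrims} the limit of $\pi(\sigma_f^n(\tau))$ would lie in a non-trivial stratum of $\AM_f$, directly contradicting $\Gamma_f=\emptyset$.
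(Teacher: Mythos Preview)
Your reduction to the parabolic case is fine, and the block decomposition via the ``$2{+}2$'' preservation lemma is a nice observation that the paper does not make. However, the argument breaks down at the step you yourself flag as the heart of the matter.

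With $\lambda_{\Gamma_0}=1$, the Gr\"otzsch inequality only yields that the relevant annulus moduli are \emph{non-decreasing} along the $\sigma_f$-orbit; combined with the hypothesis $\Gamma_f=\emptyset$ and Theorem~\ref{thm:curvebound}, they are also \emph{bounded above}. Hence these moduli converge to a finite limit---they do \emph{not} grow without bound, and your proposed contradiction with Theorem~\ref{thm:pilgrims} never materializes. The appeal to ``contraction of $\fh^{-1}$ on contractible loops of $T$'' does not help: $\fh$ is an unramified covering of the torus, so preimages of null-homotopic annuli are just disjoint unions of conformal copies, giving no extra gain in modulus beyond what $M_{\Gamma_0}$ already encodes. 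Since $\lambda_{\Gamma_0}=1$, there is no gain. The paper's proof proceeds precisely by exploiting this convergence: one passes to an accumulation point in $\M'_f$ (via Proposition~\ref{prop:covers} and Theorem~\ref{thm:pilgrims}), obtains a point $\tau$ at which the Gr\"otzsch inequality is an \emph{equality}, and then unpacks the rigidity of that equality case (all preimage annuli are concentric round sub-annuli whose closures tile the sphere). It is this structural analysis---not a blow-up argument---that simultaneously forces the $(2,2,2,2)$ signature and the $2{+}2$ property for every curve of $\Gamma$.

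A secondary issue: your step ruling out the non-$(2,2,2,2)$ parabolic signatures by the affine-lift length estimate ``as in \cite{DH}'' is only immediate when $P_f=\PP$; with extra marked points in $P_f\setminus\PP$ one can have essential curves on $(\Sphere,P_f)$ whose lifts to the universal orbifold cover are not governed by the same estimate, so this step needs a genuine argument. The paper sidesteps this entirely by deriving the $(2,2,2,2)$ conclusion from the Gr\"otzsch equality analysis itself rather than ruling out the other signatures in advance.
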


\begin{proof}The map $f$ must have parabolic orbifold by Theorem~\ref{thm:CanonicalObstruction}.  Since the canonical obstruction is empty, the leading eigenvalue of $M_\Gamma$ is equal to 1. The proof is similar to McMullen's classification of multicurves $\Gamma$ with $\lambda(\Gamma)=1$ for rational maps (compare \cite{McMbook}).

{\bf Case I.} As an illustration of the idea of the proof, we first consider the case when $\Gamma$ consists of a single simple closed curve $\gamma$.  Then the Thurston matrix has the form $M_\Gamma=(1)$. Denote by $r(\tau)=M(\gamma,\tau)$ the maximal modulus of an annulus homotopic to $\gamma$ in the Riemann surface corresponding to $\tau$. (We can always find an annulus of maximal modulus by Lemma~\ref{lem:modulus}.) Recall, that  by the Collar Lemma, this is approximately equal to $1/l(\gamma,\tau)$. %(see Theorem~\ref{thm:cl})
Then, by the Gr\"otzsch inequality (see more detailed explanation below), $r(\sigma_f(\tau)) \ge r(\tau)$. As usual, denote $\tau_n =\sigma^n_f(\tau_0)$. The sequence $r_n=r(\tau_n)$ is non-decreasing and bounded because $\gamma$ is not a part of canonical obstruction and, therefore, $l(\gamma,\tau_n)>L>0$ for some $L$. It follows that $r_n$ has a limit which we denote by $r$.

We pick initial $\tau_0$ so that  $r_0>\pi/\ln(3+2\sqrt{2})$ to make sure that the geodesic corresponding to $\gamma$ is shorter than any simple closed geodesic that intersects $\gamma$ by the Collar Lemma. 
%We pick initial $\tau_0$ so that $r_0$ is large enough to make sure that the geodesic corresponding to $\gamma$ is shorter than any simple closed geodesic that intersects $\gamma$. By Corollary~\ref{cor:disjointm}, it is enough to take $\tau_0$ such that $r_0>\pi/\ln(3+2\sqrt{2})$.

 Theorem~\ref{thm:pilgrims} implies that all $m'_n=\pi_1(\tau_n)$ belong to a compact subset of $\M'_f$, where $\pi_1$ is defined as before (see Proposition~\ref{prop:covers}). Consider a subsequence $n_k$ such that $\{m'_{n_k}\}$ converges to $p \in \M'_f$ and  $\{m'_{n_k+1}\}$ converges to $q \in \M'_f$. On either of  Riemann surfaces corresponding to $p$ and $q$, there exists exactly one homotopy class of simple closed curves $\gamma'$ such that $\gamma'$ and $\gamma$ are in the same equivalence class under the action of the covering group corresponding to $\pi_1$, and that $M(\gamma',p)=r$ (or $M(\gamma',q)=r$ respectively). Indeed, any two different curves in the same equivalence class have non-zero intersection number because otherwise they would bound an annulus that cannot contain any of the marked points and, thus, be homotopic to each other.  Since $r>r_0$ is large enough, $M(\gamma'',p)<r$  (and the same for $q$) for any other curve $\gamma''$ in the equivalence class of $\gamma$ because otherwise $\gamma'$ and $\gamma''$ would not intersect by the Collar Lemma. The same statement obviously holds for equivalence classes of curves with respect to the pure mapping class group.

Recall the commutative diagram (\ref{diag:covers}):
\begin{equation*}
\begin{diagram}
\node{\T_f} \arrow{e,t}{\sigma_f} \arrow{s,l}{\pi_1} \node{\T_f}
\arrow{s,l}{\pi}
\\
\node{\M'_f} \arrow{e,t}{\sigmat_f} \node{\M_f}
\end{diagram}
\end{equation*}

Since all the maps involved are continuous, it follows that $\sigmat_f(p)= \lim \sigmat_f(m'_{n_k})=\lim \pi(\tau_{n_k+1})=\pi_2(q)$. Take any point $\tau \in \T_f$ such that $\pi_1(\tau)=p$ and $r(\tau)=M(\gamma,\tau)=r$ (a point $p$ provides a conformal structure on the base topological surface together with marking of some equivalence classes of curves, to get a  point in the fiber $\pi_1^{-1}(p)$ we simply choose a representative of each marked class; for the equivalence class of $\gamma$ we choose the shortest representative). Then from the same diagram we get $\pi(\sigma_f(\tau))=\pi_2(q)$. Therefore $r(\sigma_f(\tau))$ is bounded above by $M(\gamma',\pi_2(q))$ where $\gamma'$ is equivalent to $\gamma$ under the action of the pure mapping class group. As we have seen above, this implies that $r(\sigma_f(\tau)) \le r$. On the other hand, we know that $r(\sigma_f(\tau)) \ge r(\tau)= r$ which implies $r(\sigma_f(\tau))=r(\tau)=r$. We now investigate under which conditions the inequality $r(\sigma_f(\tau)) \ge r(\tau)$ can become an equality. 

Denote by $A$ an annulus in the Riemann surface corresponding to $\tau$ that is homotopic to $\gamma$ relative to $Q_f$ and  has maximal possible modulus $r$ (see Lemma~\ref{lem:modulus}); let annuli $A_1, \ldots, A_k$ be the disjoint preimages of $A$ in the Riemann surface corresponding to $\sigma_f(\tau)$ under the map $f_\tau$ that are homotopic to $\gamma$. %Since $\Gamma$ is invariant by assumption, all $A_i$ are homotopic to $\gamma$. 
 Each $A_i$ is mapped to $A$ by $f_\tau$ as a non-ramified cover of degree $d_i$. Therefore, $\mod A_i = 1/d_i \, \mod A = r/d_i$.  By the definition of Thurston obstructions, $M_{\{\gamma\}} = (\sum 1/d_i)$ hence $\sum 1/d_i=1$.  Consider the smallest annulus $B$ containing all $A_i$ and homotopic to $\gamma$ relative to $Q_f$. By the Gr\"otzsch inequality, $\mod B \ge \sum \mod A_i = \sum (1/d_i)r =r$. 

If we assume that $r(\sigma_f(\tau)) = r(\tau)$ then $\mod B \le r$, hence $B$ is an annulus homotopic to $\gamma$ relative to $Q_f$ with  maximal possible modulus and the last inequality is, in fact, an equality. Hence, the closure of $B$ must be the whole Riemann sphere; and $A_i$ are round subannuli of $B$ and their closure covers $B$. If there were only one preimage $A_1$ of $A$, then the degree $d_1$ would have to be equal to 1; this would imply that $f$ is not a Thurston map but a homeomorphism because the closure of $B=A_1$ is the whole sphere. Every two adjacent annuli share a common boundary component $\alpha$ which is an analytic curve. Then the corresponding boundary component $\beta=f_\tau(\alpha)$ of $B$ is piece-wise (except for possibly at the images of critical points) smooth. Since both adjacent to $\alpha$ annuli   are mapped to $A$, % Since $A$ was chosen to be of maximal modulus, 
$\beta$ has only one complementary component. %, thus simply connected. 
We conclude that $\beta$ is a smooth curve segment connecting two critical values of $f$ and passing through no other critical value, because a component of the preimage  of $\beta$ is a simple closed curve. Moreover, all critical points of $f_\tau$ in $\alpha$ are of degree 2.
If the number of annuli $k$ is at least 3 then the same is true for the other boundary component $\delta$ of $A$. Hence there exist at least 4 critical values -- at least two in each complementary component of $A$ --  and this yields the statement of the theorem. 

We can reduce the case $k=2$ to the case, say, $k=4$ by considering the second iterate of $f$. Alternatively, we can prove the theorem in the case $k=2$ directly as follows.

The number of critical points on $\alpha$ is equal to the degree of $f$ on $\alpha$, which is twice the degree of $f$ on either of the annuli; in particular, this yields $d_i$ are all equal. Thus, in the case of exactly two annuli, the map $f$ has degree 4. We already know that there are exactly 4 critical points of $f_\tau$ on $\alpha$. Each of the two preimages $\delta_1$ and $\delta_2$ of $\delta$ must contain at least one critical point because otherwise $A_1 \cup \delta_1$ (or $A_2 \cup \delta_2$ respectively)  contains no critical points and, hence, maps by $f_\tau$ conformally on $A \cup \delta$ which is a contradiction. Therefore, $f$ has exactly 6 simple critical points since a branched cover of degree 4 has at most $4*2-2=6$ critical points. This implies that $f$ is a $(2,2,2,2)$-map. Indeed, the degree of $f$ is 4 so every critical value is the image of at most two different critical points. Thus, we must have at least 3 critical values and their full preimage consists of the 6 critical points. If the postcritical set contains only 3 points, then all critical values are also critical points and the signature of the orbifold corresponding to $f$ is $(\infty,\infty,\infty)$, which contradicts the fact that $f$ has parabolic orbifold. We see that $\PP$ contains at least 4 points; since $f$ has parabolic orbifold we conclude that $f$ is a $(2,2,2,2)$-map.

%the signature of the orbifold associated to $f$ can contain only $2$'s or $\infty$'s. There are only two such orbifolds: with signature $(2,2,2,2)$ or $(2,2,\infty)$. In the latter case, however, each of the three postcritical points must be a preimage of exactly two critical points and have no other preimages; this would force all postcritical points to be critical and the signature of the orbifold to be $(\infty,\infty,\infty)$ which is both impossible and contradicting previous assumptions.

Note that the endpoints of $\beta$ are the only two postcritical points of $f$ on $\beta$, %the Riemann surface corresponding to $\tau$,
 because every other point on $\beta$ has exactly 4 preimages on $\alpha$ and none of them are either critical or marked. Therefore, each complementary component of $\gamma$ has two postcritical points.
%A postcritically finite branched cover with the latter orbifold, however, must be a topological polynomial, i.e. have a completely invariant point.

{\bf Case II.} We use the same approach in the case when $M_\Gamma=\{\gamma_1,\ldots,\gamma_k\}$ is positive and $k \ge 2$. %Here we do not assume that $\Gamma$ is invariant but will deduce it in the course of the proof.
 By the Perron-Frobenius theorem (Theorem~\ref{thm:p-f}), there exists a positive eigenvector $v$ corresponding to $\lambda_\Gamma=1$. Denote $r(\tau)=\sup \min_{i=\overline{1,k}} \mod A_i/v_i$, where the supremum is taken over all configurations of disjoint annuli $A_i$ in the Riemann surface corresponding to $\tau$, such that $A_i$ is homotopic to $\gamma_i$ for $i=\overline{1,k}$.

Consider annuli $B_i$ in the Riemann surface corresponding to $\sigma_f(\tau)$, such that  $B_i$ is homotopic to $\gamma_i$ and $B_i$ contains all preimages of $A_j$ that are homotopic to $\gamma_i$ for every $i=\overline{1,k}$. The same arguments as above show that $(\mod B_i)^T \ge M_\Gamma (\mod A_i)^T$. If $A_i$ is the maximal configuration that realizes $r(\tau)$ (again, existence of such a configuration follows from Lemma~\ref{lem:modulus}), then, by definition, $(\mod A_i)^T \ge r(\tau)v$ and, hence, $(\mod B_i)^T \ge M_\Gamma r(\tau)v = r(\tau)v$. Thus, $r(\sigma_f(\tau)) \ge r(\tau)$. We use pre-compactness of $m'_n$ as above to construct a point $\tau$ such that $r(\sigma_f^2(\tau))=r(\sigma_f(\tau)) = r(\tau)=r$. This means that there exists $i$ for which $\mod B_i=r v_i$ and the inequality corresponding to the $i$-th line of  $(\mod B_i)^T \ge M_\Gamma (\mod A_i)^T$ is an equality. Since all entries of $M_\Gamma$ are positive, this immediately forces $\mod A_i=r v_i$ for all $i$. Because $r(\sigma_f^2(\tau))=r$, the same reasoning implies that $\mod B_i=r v_i$, for all $i$, and $\{B_i\}$ is a maximal confi\-guration for $\sigma_f(\tau)$. In particular, we infer that the closure of the union of $B_i$ covers the whole sphere, and $\Gamma$ is invariant and no curve of $\Gamma$ has a non-essential preimage.

Moreover, all  $f_\tau$-preimages of annuli $A_j$ that are homotopic to $\gamma_i$ are round subannuli of $B_i$ and  closure of their union covers $B_i$. If two preimages of some $A_j$ abut along a boundary component then the image boundary component of $A_j$ is a smooth curve segment (see above). If preimages of two different $A_j$ and $A_l$ abut in $\sigma_f(\tau)$ then the annuli themselves abut in $\tau$. Since for every $j$, there exists at least one preimage of $A_j$ that is homotopic to $\gamma_i$, all annuli are concentric and so are the curves $\gamma_i$. Since $B_i$ are homotopic to $\gamma_i$, it follows that all preimages of $A_i$ are also concentric. We see that no boundary component that separates two annuli $A_i$ can contain a postcritical point. By removing marked points on these boundary components, we can reduce the statement of the theorem to the previous case.

{\bf Case III.} The general case is now easily reduced to the previous case. Suppose $\Gamma$ is an arbitrary simple obstruction. Take a subset $\Gamma_1$ such that $M_{\Gamma_1}$ is irreducible and $\lambda_{\Gamma_1}=1$ (see Section~\ref{sec:matrices}). By Theorem~\ref{thm:imprimitive}, some iterate $k$ of $M_{\Gamma_1}$ can be conjugated by a permutation matrix  into the block form where all the blocks on diagonal are positive and all other blocks are zero, moreover, the leading eigenvalue of each block is 1. Take further subset $\Gamma_2 \subset \Gamma_1$ corresponding to any of the diagonal blocks. Then $M_{\Gamma_2}$ with respect to $f^k$  is a positive matrix with leading eigenvalue 1. From the previous cases, we know that $f^k$ is a $(2,2,2,2)$-map and that every element of $\Gamma_2$ has two postcritical points in each complementary component. This immediately implies, that $f$ itself is a  $(2,2,2,2)$-map with the same postcritical set. Moreover, we get that $\Gamma_2$ is invariant. Therefore, $\Gamma_1$ is a union of invariant multicurves for $f^k$  and no curve of these multicurves has a non-essential preimage. It follows that $\Gamma_1$ is $f$-invariant and all curves in $\Gamma_1$ have two postcritical points in each complementary component.

Set $\Gamma'=\Gamma \setminus \Gamma_1$. Then $\lambda_{\Gamma'}=1$ because $\Gamma$ is simple and $\Gamma_1$ is invariant. We repeat the argument until we exaust $\Gamma$, proving the desired property for all curves in $\Gamma$. 
\end{proof}

Recall that to a $(2,2,2,2)$-map $f$ we associate a linear operator $\fh_* \colon H_1(T,\Z) \to H_1(T,\Z)$ (see Section~\ref{sec:parabolic}). The following statement is in a certain sense a converse to the previous theorem.

\begin{theorem}
\label{thm:2222obstruction}
The canonical obstruction of a $(2,2,2,2)$-map $f$ contains a curve that has two postcritical points in each complementary component if and only if $\fh_*$ has two different integer eigenvalues.
\end{theorem}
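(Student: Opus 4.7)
My plan is to prove both implications separately, using the push-forward operator $\fh_*$ on $H_1(T,\Z)$ together with the theory of canonical obstructions developed earlier.

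For the forward direction, assume $\gamma\in\Gamma_f$ has two postcritical points in each complementary component. Since $\Gamma_f$ is completely invariant by Proposition~\ref{prop:simple}, every $f$-preimage of $\gamma$ is either non-essential or homotopic to a curve in the multicurve $\Gamma_f$, so in particular its intersection number with $\gamma$ is zero. Proposition~\ref{prop:homology} then yields $\fh_*(\langle\gamma\rangle)=\pm d\langle\gamma\rangle$ for some positive integer $d$, producing an integer eigenvalue $\pm d$ of $\fh_*$; since $\det\fh_*=\deg f\in\Z$ and $\mathrm{tr}\,\fh_*\in\Z$, the second eigenvalue $\pm\deg f/d$ is also an integer. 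To prove distinctness I would suppose for contradiction that both equal a common integer $d$. Either $\fh_*=\pm dI$ is scalar, so $\fh$ is homotopic to the affine map $z\mapsto\pm dz+b$ on $T$ and $f$ is Thurston equivalent to a Latt\`es rational map; the first proposition of this section then forces $\Gamma_f=\emptyset$, contradicting $\gamma\in\Gamma_f$. Or $\fh_*$ is a non-trivial Jordan block, and the lift of $\sigma_f$ to the Teichm\"uller space of $T$ becomes a parabolic M\"obius translation of the form $\tau\mapsto\tau+1/d$ on $\mathbb{H}$, whose iterates are periodic modulo the Dehn twist $\tau\mapsto\tau+1$. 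Consequently the flat $p$-length of $\gamma$ stays bounded along the $\sigma_f$-orbit, and by the Collar Lemma its hyperbolic length stays uniformly bounded away from zero, again contradicting $\gamma\in\Gamma_f$.

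For the converse, let $\lambda_+$ and $\lambda_-$ be the distinct integer eigenvalues of $\fh_*$ with $|\lambda_+|>|\lambda_-|\ge 1$, let $v\in H_1(T,\Z)$ be a primitive eigenvector for $\lambda_-$, and let $\gamma$ be the essential simple closed curve on $(\Sphere,P_f)$ whose $p$-preimages have homology class $\pm v$. By Proposition~\ref{prop:trivialhom}, $\gamma$ has two postcritical points in each complementary component. Since $\fh_*$ is invertible over $\Q$, no $\fh$-preimage of a non-null-homologous curve can be contractible, so every $f$-preimage of $\gamma$ is essential with $p$-preimages of homology $\pm v$; all such preimages are therefore isotopic to $\gamma$ and are mapped onto $\gamma$ by $f$ with degree $|\lambda_-|$, and there are exactly $\deg f/|\lambda_-|=|\lambda_+|$ of them. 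Hence the Thurston matrix of $\{\gamma\}$ is the $1\times 1$ matrix $(|\lambda_+|/|\lambda_-|)$, whose leading eigenvalue strictly exceeds $1$. By the remark in the introduction, $\gamma$ then belongs to $\Gamma_f$, and it has two postcritical points in each complementary component by construction.

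The main obstacle is the Jordan-block sub-case of the forward direction, where one must identify the lift of $\sigma_f$ to the torus Teichm\"uller space with a parabolic M\"obius transformation and verify the boundedness of its orbit modulo the mapping class group. A secondary difficulty arises in the converse when $|P_f|>4$: additional marked points of $P_f$ refine isotopy classes in $(\Sphere,P_f)$, so one must either pass to an iterate via Proposition~\ref{prop:iterate} or apply the modulus/length machinery from the proof of Theorem~\ref{thm:noncanonical} directly to $\gamma$ to ensure that essential $f$-preimages of $\gamma$ all remain isotopic to it in $(\Sphere,P_f)$, not merely in $(\Sphere,\PP)$.
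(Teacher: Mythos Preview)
Your converse direction ($\Leftarrow$) follows the paper's argument essentially verbatim: pick the eigenvector for the smaller eigenvalue, count preimages and degrees, get a $1\times1$ Thurston matrix $>1$. The paper handles the case $|P_f|>4$ that you flag by passing to $F=(f,\PP)$, using the projection $p\colon\T_f\to\T_F$, and arguing that the modulus of an annulus homotopic to $\gamma$ rel $\PP$ blows up, so at least one of the $\le|P_f\setminus\PP|+1$ concentric sub-annuli rel $P_f$ must also blow up; Theorem~\ref{thm:curvebound} then puts some curve of that refined homotopy class into $\Gamma_f$. Your remark that ``essential $f$-preimages of $\gamma$ all remain isotopic to it in $(\Sphere,P_f)$'' is not what is needed and is not what the paper proves.

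Your forward direction ($\Rightarrow$) begins identically (invariance, Proposition~\ref{prop:homology}, integrality of the second eigenvalue), but your treatment of the distinctness diverges from the paper and has a gap you did not flag. In both your sub-cases you argue about $f$ itself, but your conclusions (``$f$ is Thurston equivalent to a Latt\`es map'', ``the lift of $\sigma_f$ is a parabolic translation on $\mathbb H$'') are only available for the four-point map $F=(f,\PP)$, where $\T_F\cong\mathbb H$ and $\sigma_F$ is the M\"obius transformation associated to $\fh_*^{-1}$. When $|P_f|>4$ the map $f$ need not be equivalent to a rational map even if $\fh_*=dI$, and $\sigma_f$ is not a one-variable M\"obius map. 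The fix is to reduce to $F$ first (as the paper does): since lengths can only increase under $p\colon\T_f\to\T_F$, the curve $\gamma$ viewed rel $\PP$ lies in $\Gamma_F$, and it suffices to derive a contradiction for $F$.

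After that reduction your dynamical argument is correct and is a genuine alternative to the paper's. In both sub-cases the paper instead builds, by hand, curves crossing $\gamma$ whose moduli do not decrease along the orbit: in the scalar case any $\gamma'$ not homotopic to $\gamma$ works, and in the Jordan case one finds a sequence $\gamma_i$ with $\langle\gamma_i\rangle=(-ib,1)^T$ (after replacing $F$ by an iterate so that $d\mid a$) for which $M(\gamma_{i+1},\tau_{i+1})\ge M(\gamma_i,\tau_i)$. Your approach computes $\sigma_F$ directly: $\fh_*=dI$ gives $\sigma_F=\id$ (fixed point, so $\Gamma_F=\emptyset$), while the Jordan block gives $\sigma_F(\tau)=\tau-a/d$, whose orbit has constant imaginary part and hence stays in a compact set of $\M_F$, again forcing $\Gamma_F=\emptyset$. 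This is shorter and more conceptual once the identification of $\sigma_F$ with a M\"obius map is in hand; the paper's curve-based argument has the advantage of using only the Gr\"otzsch/modulus machinery already set up in the proof of Theorem~\ref{thm:noncanonical}, without invoking the explicit form of $\sigma_F$ on $\mathbb H$.
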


\begin{proof}
Let $\fh_*$ have two different integer eigenvalues $d_1$ and $d_2$.Without loss of generality, we assume that both are positive and $d_1<d_2$. We first assume that $Q_f$ has exactly four points so that $\PP=Q_f$.  Take a curve $\gamma$ in $\Sphere \setminus Q_f$ such that $\langle \gamma \rangle$ is an eigenvector of $\fh_*$ corresponding to $d_1$. For any connected component $\gamma'$ of the $f$-preimage of $\gamma$, we have $\fh_*(\langle \gamma' \rangle)=d\langle \gamma \rangle$ with some $d\in\Z$. It follows that $\langle \gamma' \rangle$ is also an eigenvector of $\fh_*$ corresponding to $d_1$ because $\fh_*$ is diagonalizable. We infer that all preimages of $\gamma$ are homotopic to  $\gamma$ and each of them is mapped to $\gamma$ by $f$ with degree $d_1$. Since the degree of $f$ is equal to $d_1d_2$, there are exactly $d_2$ preimages of $\gamma$. Therefore, $\Gamma=\{\gamma\}$ is an obstruction with $M_\Gamma=(d_2/d_1)$ and $\Gamma \subset \Gamma_f$ because the leading eigenvalue $\lambda_\Gamma=d_2/d_1>1$.

If $Q_f$ has more then four points, we first consider the Thurston map $F=(f, \PP)$ where $\PP$ is the postcritical set of $f$. By the arguments above, there exists a curve $\gamma$ that has two postcritical points in each complementary component which is a part of the canonical obstruction of $F$. Clearly, there exists a canonical projection $p$ from $\T_f$ to $\T_F$ that makes the following diagram commute: 
\[
\begin{diagram}
\node{\T_f} \arrow{e,t}{\sigma_f} \arrow{s,l}{p} \node{\T_f}
\arrow{s,l}{p}
\\
\node{\T_F} \arrow{e,t}{\sigma_F} \node{\T_F}
\end{diagram}
\]

As we iterate $p(\tau_1)$ in the \Teich space $\T_F$, the maximal modulus $M(\gamma,p(\tau_m))$ of an annulus homotopic to $\gamma$ tends to infinity. This modulus is the same as the maximal modulus on an annulus $A$ homotopic to $\gamma$ in $\tau_m \in T_f$ if we fill in all extra punctures. The extra marked points split $A$ into at most $k+1$ concentric annuli, where $k$ is the number of points in $Q_f\setminus \PP$. Thus, the lengths of all homotopy classes of curves in $\Sphere \setminus Q_f$ that are homotopic to $\gamma$ relative to $\PP$ cannot be uniformly bounded from below as we iterate. Indeed, in this case all concentric annuli must have modulus  uniformly bounded from above, and since there are at most $k+1$ of them, the modulus of an annulus homotopic to $\gamma$ relative to $\PP$ would be also bounded from above, which is a contradiction. By Theorem~\ref{thm:pilgrims}, there exists a curve homotopic to $\gamma$ relative to $P_f$ in the canonical obstruction $\Gamma_f$.

%We construct a minimal invariant multicurve $\Gamma$ containing $\gamma$ inductively. We start with the multicurve $\Gamma_0=\{\gamma\}$. To obtain $\Gamma_{i+1}$ we simply add to $\Gamma_i$ all homotopy classes of preimages of curves in $\Gamma_i$  

Let us now prove the converse direction; suppose that $\gamma$ is a curve in the canonical obstruction of $f$ that has two postcritical points in each complementary component. This immediately implies that $\gamma$ is also an element of the canonical obstruction of $F=(f,\PP)$ since any annulus in $\Sphere \setminus Q_f$ is also an annulus in $\Sphere \setminus \PP$. We suppose, by contradiction, that both  eigenvalues of $\fh_*$ are equal to some $d\in \N$. If we fix an appropriate basis of $H_1(T,\Z)$, the operator $\fh_*$ assumes the form
$$  \fh_*= 
\left( \begin{array}{cc}
d & a \\	
0 & d
\end{array} \right),
$$
where $a\in\Z$. After identifying $\T_F$ with the hyperbolic plane $\mathbb{H}$,  Thurston's pullback assumes the form $\sigma_F(\tau)=\tau +a/d$ (see Lemma~9.6 in \cite{DH}). We see that the orbits of this map are bounded in the moduli space $\M_F$, which yields that the canonical obstruction of $F$ must be empty.
\end{proof}

\begin{corollary}
\label{cor:emptyobstruction}
The canonical obstruction of a $(2,2,2,2)$-map $f$ is empty if and only if every curve of every simple Thurston obstruction for $f$ has two postcritical points of $f$ in each complementary component and the two eigenvalues of $\fh_*$ are equal or non-integer. For any other Thurston map, the canonical obstruction is empty if and only if there exist no Thurston obstruction for $f$.
\end{corollary}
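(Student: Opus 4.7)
The plan is to derive this corollary as a direct synthesis of Theorem~\ref{thm:CanonicalObstruction}, Theorem~\ref{thm:noncanonical}, Theorem~\ref{thm:2222obstruction} and Proposition~\ref{prop:simple}, splitting on whether $f$ is a $(2,2,2,2)$-map. The single background fact I will repeatedly invoke is that any Thurston obstruction contains a simple one (noted right after the definition of ``simple''), so it suffices to talk about simple obstructions throughout.

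First I handle the ``other Thurston map'' clause. If $f$ has hyperbolic orbifold, Theorem~\ref{thm:CanonicalObstruction} says $\Gamma_f = \emptyset$ iff $f$ is Thurston equivalent to a rational map, in which case a standard application of Thurston rigidity (or just the contrapositive of Theorem~\ref{thm:CanonicalObstruction}: any obstruction forces $\Gamma_f \ne \emptyset$ for hyperbolic orbifold maps, which is the content of Theorem~\ref{thm:noncanonical} specialized to this case) gives the equivalence with ``no Thurston obstruction exists.'' If $f$ has parabolic orbifold but is not a $(2,2,2,2)$-map, Theorem~\ref{thm:noncanonical} directly says $\Gamma_f = \emptyset$ forces any simple obstruction to live on a $(2,2,2,2)$-map, hence no simple (and therefore no) Thurston obstruction can exist; conversely if no obstruction exists then the canonical obstruction, which would itself be a Thurston obstruction by Proposition~\ref{prop:simple}, must be empty.

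For the $(2,2,2,2)$ case I argue the two directions separately. Suppose $\Gamma_f = \emptyset$. Then for any simple Thurston obstruction $\Gamma$, Theorem~\ref{thm:noncanonical} gives that every curve of $\Gamma$ has two postcritical points in each complementary component, which is the first condition. For the eigenvalue condition, I argue by contradiction: if $\fh_*$ had two distinct integer eigenvalues, Theorem~\ref{thm:2222obstruction} would produce a curve with two postcritical points in each complementary component belonging to $\Gamma_f$, contradicting $\Gamma_f = \emptyset$.

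Conversely, assume the two conditions hold for a $(2,2,2,2)$-map $f$ and suppose, toward a contradiction, that $\Gamma_f \ne \emptyset$. By Proposition~\ref{prop:simple}, $\Gamma_f$ is itself a simple (completely invariant) Thurston obstruction, so by the first hypothesis every curve in $\Gamma_f$ has two postcritical points in each complementary component. Applying Theorem~\ref{thm:2222obstruction} to any such curve in $\Gamma_f$ yields that $\fh_*$ has two distinct integer eigenvalues, contradicting the second hypothesis. I do not expect any serious obstacle: the only mild point to keep straight is the vacuous case where $f$ admits no simple obstruction at all, in which the first condition holds trivially and the second condition, via the same Proposition~\ref{prop:simple} / Theorem~\ref{thm:2222obstruction} argument, still forces $\Gamma_f = \emptyset$.
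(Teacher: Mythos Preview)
Your proposal is correct and follows essentially the same route as the paper, which simply states that the corollary ``follows directly from the previous two theorems'' (Theorems~\ref{thm:noncanonical} and~\ref{thm:2222obstruction}); you have merely written out the details, together with the needed appeal to Proposition~\ref{prop:simple}. One minor simplification: your separate treatment of the hyperbolic-orbifold case via Theorem~\ref{thm:CanonicalObstruction} is unnecessary, since Theorem~\ref{thm:noncanonical} already handles \emph{all} non-$(2,2,2,2)$ maps uniformly (if $\Gamma_f=\emptyset$ and a simple obstruction existed, $f$ would have to be a $(2,2,2,2)$-map), and the converse direction is just Proposition~\ref{prop:simple} in every case.
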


\begin{proof}
The statement follows directly from the previous two theorems.
%One direction of this statements is Theorem~\ref{thm:noncanonical}. The converse of the second statement is obvious, and the converse of the first one follows immediately from the previous theorem.
\end{proof}

 Recall that we can decompose $f$ along a completely invariant multicurve $\Gamma$ in the following manner. We consider two copies of the sphere $(\Sphere, Q_f)$ such that $f$ maps the first copy onto the second copy. We pinch to a single point every curve of $\Gamma$ in the range sphere, and we pinch to a single point every curve of $f^{-1}(\Gamma)$ in the domain sphere. Now $f$ is a branched cover between two nodal spheres, i.e. a collection of branched covers sending components of the domain onto components of the range; all nodes are considered to be marked points for appropriate covers. If we perform the stabilization process on the domain nodal sphere, which is done by further pinching to a single point every component of the domain that has at most 2 marked points (including nodes), we obtain a nodal surface that is canonically homeomorphic (up to isotopy relative to marked points) to the nodal sphere in the range. Every component of this nodal sphere is pre-periodic and the dynamics of $f$ splits into dynamics on periodic components. The latter can be understood by studying the first return map $F$ of a given periodic component $C$ taking  $Q_F$ to be the set of marked points and nodes on this component. This map is either a Thurston map or a homeomorphism.
 
 The following theorem is a generalization of Pilgrim's conjecture (\cite[Theorem~10.3]{S11}). We use a different approach here.

\begin{theorem}
\label{thm:KevinConjecture2}
  If the first-return map $F$ of a periodic component $C$ of the nodal topological surface corresponding to pinching curves in $\Gamma_f$ is a Thurston map (i.e. the degree of $F$ is greater than 1), then the canonical obstruction of  $F$ is empty.
\end{theorem}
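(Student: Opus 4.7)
The plan is to assume $\Gamma_F\neq\emptyset$ and derive a contradiction by transferring the degeneration from $\sigma_F$ back to $\sigma_f$ via the continuous extension of Thurston's pullback to the augmented Teichm\"uller space (Theorem~\ref{thm:extensionA}) combined with Theorem~\ref{thm:curvebound}. The first step is to set up the product structure on the relevant stratum: since $\Gamma_f$ is completely invariant (Proposition~\ref{prop:simple}), the extended $\sigma_f$ preserves $\S_{\Gamma_f}\subset\AT_f$ and permutes the factors in the natural product decomposition $\S_{\Gamma_f}=\prod_i \T_{C_i}$ indexed by components of the noded surface, in exactly the way $f$ permutes these components. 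For the periodic component $C$ of period $p$, unwinding the construction of Thurston's pullback on the noded surface should identify $\sigma_f^p$, restricted to the $C$-factor, with $\sigma_F\colon \T_F\to\T_F$.

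Now suppose $\gamma\in\Gamma_F$. Unpinching $\Gamma_f$ turns $\gamma$ into a simple closed curve $\tilde\gamma$ lying in $C\subset\Sphere$; by construction $\tilde\gamma$ is disjoint from, and non-homotopic in $\Sphere\setminus P_f$ to, every curve of $\Gamma_f$. A bookkeeping check shows $\tilde\gamma$ is essential in $(\Sphere,P_f)$: if one side of $\gamma$ in $C$ carries $p_A$ genuine points of $P_f$ and $k_A$ nodes (with $p_A+k_A\ge 2$ by essentiality in $C$), then the corresponding side of $\tilde\gamma$ in $\Sphere$ contains $p_A+2k_A\ge 2$ points of $P_f$, since each curve of $\Gamma_f$ is essential in $(\Sphere,P_f)$. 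Fix $\tau\in\T_f$. By Theorem~\ref{thm:pilgrims} we may pass to a subsequence with $\sigma_f^{n_k}(\tau)\to s\in\S_{\Gamma_f}$; denote by $\tau_C$ the $C$-coordinate of $s$. Under the identification above, $\sigma_F^m(\tau_C)$ is the $C$-coordinate of $\sigma_f^{mp}(s)$, and since $\gamma\in\Gamma_F$ we have
$$l(\tilde\gamma,\sigma_f^{mp}(s))=l(\gamma,\sigma_F^m(\tau_C))\longrightarrow 0\quad\text{as }m\to\infty.$$

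The conclusion is then a double-continuity argument. By continuity of the extended pullback, for each fixed $m$ we have $\sigma_f^{n_k+mp}(\tau)\to\sigma_f^{mp}(s)$ as $k\to\infty$; and because $\tilde\gamma$ is disjoint from every curve of $\Gamma_f$, the length function $l(\tilde\gamma,\cdot)$ extends continuously across $\S_{\Gamma_f}$, so $l(\tilde\gamma,\sigma_f^{n_k+mp}(\tau))\to l(\tilde\gamma,\sigma_f^{mp}(s))$. Choosing $m$ large first and then $k$ large yields iterates of $\sigma_f$ along which $l(\tilde\gamma,\cdot)$ is arbitrarily small, so $\liminf_n l(\tilde\gamma,\sigma_f^n(\tau))=0$. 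By Theorem~\ref{thm:curvebound}, this forces $\tilde\gamma\in\Gamma_f$, contradicting $\tilde\gamma\subset C$. The main obstacle is verifying the two compatibility statements on which the argument rests: the identification of $\sigma_f^p$ restricted to the $C$-factor of $\S_{\Gamma_f}$ with $\sigma_F$, and the continuity of $l(\tilde\gamma,\cdot)$ on $\AT_f$ at points of $\S_{\Gamma_f}$ for curves disjoint from $\Gamma_f$; both are technical but should follow from the construction of the extension in \cite{S11} together with standard augmented Teichm\"uller theory.
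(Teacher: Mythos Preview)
Your approach is genuinely different from the paper's. The paper does not transfer degeneration through the extended pullback at all; instead it reruns the modulus/Gr\"otzsch argument of Theorem~\ref{thm:noncanonical}. Assuming $\Gamma_F\neq\emptyset$, one passes to an iterate so that some sub-obstruction $\Gamma'\subset\Gamma_F$ has positive Thurston matrix with leading eigenvalue $1$, defines the weighted maximal-modulus function $r(\tau)=\sup\min_i \bmod A_i/v_i$ as in Case~II, and uses compactness of the accumulation set in the \emph{finite} intermediate cover $\AM'_f$ (Proposition~\ref{prop:covers} and Theorem~\ref{thm:pilgrims}) to produce a boundary point where the Gr\"otzsch inequality is an equality. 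That equality forces $F$ to be a $(2,2,2,2)$-map with every $\gamma\in\Gamma'$ separating the four postcritical points $2+2$; Theorem~\ref{thm:2222obstruction} then rules out such curves from $\Gamma_F$. Your route, by contrast, would bypass the $(2,2,2,2)$ structure theory entirely and give a more conceptual argument --- if it worked.

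The gap is the sentence ``by Theorem~\ref{thm:pilgrims} we may pass to a subsequence with $\sigma_f^{n_k}(\tau)\to s\in\S_{\Gamma_f}$.'' Theorem~\ref{thm:pilgrims} only gives subsequential convergence of the \emph{projection} $\pi(\sigma_f^{n}(\tau))$ in $\AM_f$; it says nothing about accumulation of the orbit itself in $\AT_f$, and $\AT_f$ is not locally compact. Even knowing that every curve outside $\Gamma_f$ stays long (Theorem~\ref{thm:curvebound}) and that the moduli-space projection converges, the induced markings on the pieces $C_i$ can still wander off to infinity in $\prod_i\T_{C_i}$ along the sequence --- nothing you invoke excludes this. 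The paper sidesteps exactly this issue by working in the finite cover $\AM'_f$, where the needed compactness is available. Your two acknowledged ``compatibility'' items (the identification of $\sigma_f^p|_{\S_{\Gamma_f}}$ with $\sigma_F$ on the $C$-factor, and continuity of $l(\tilde\gamma,\cdot)$ across $\S_{\Gamma_f}$) are indeed routine consequences of the construction in \cite{S11}; the missing accumulation point is a separate and more serious obstacle. Repairing it would presumably require the WP non-expansion of $\sigma_f$ from \cite{S11} together with a shadowing argument and Wolpert-type control of $\sqrt{l(\gamma,\cdot)}$ by WP distance, none of which is in the proposal as written.
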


\begin{proof}

The main idea of the proof is essentially the same as in the proof of Theorem~\ref{thm:noncanonical}. Suppose, on contrary, that $\Gamma\neq\emptyset$ is the canonical obstruction of $F$. Passing to an appropriate iterate of $f$, %as in the proofs above,  
we may assume that $C$ is a fixed component (see Proposition~\ref{prop:iterate}). In this case, $F$ is simply the restriction of $f$ to the component $C$. 

%$\Gamma$ is $f$-invariant. 
The multicurve $\Gamma$ can also be viewed as a multicurve in $\Sphere \setminus Q_f$. Since $\Gamma$ is not a part of the canonical obstruction of $f$ (the curves in $\Gamma$ live in a component obtained after pinching all curves in $\Gamma_f$), the leading eigenvalue $\lambda_\Gamma$ is equal to 1. Taking a higher iterate of $f$, if needed, we can find a subset $\Gamma' \subset \Gamma$ such that $\lambda_{\Gamma'}=1$ and $M_{\Gamma'}$ is positive.

We can repeat the proof of Case II of Theorem~\ref{thm:noncanonical} almost verbatim. As before, there exists a positive eigenvector $v$ corresponding to the leading eigenvalue 1. Denote $r(\tau)=\sup \min_{i=\overline{1,k}} \mod A_i/v_i$, where the supremum is taken over all configurations of disjoint annuli $A_i$ in the Riemann surface corresponding to $\tau$, such that $A_i$ is homotopic to $\gamma_i \in \Gamma'$. We have already established that $r(\sigma_f(\tau)) \ge r(\tau)$. Take an accumulation point $m'$ of the projection of an arbitrary orbit to the space $\M'$. Then $m' \in \S_{\Gamma_f}$ by Theorem~\ref{thm:pilgrims} and $r(m')=r(\sigmat_f(m'))$. Since all annuli that are homotopic to curves in $\Gamma'$  on %the nodal surface corresponding to 
any nodal surface in $\S_{\Gamma_f}$ must be contained in $C$, from this point on the proof goes the same way as the proof of  Case II of Theorem~\ref{thm:noncanonical}. We conclude that $F$ is a $(2,2,2,2)$-map and all curves of $\Gamma'$ have two postcritical points of $F$ in each complementary component. However, by the previous theorem, no such curve can be contained in the canonical obstruction of $F$. Otherwise, the leading eigenvalue of $\Gamma'$ would be strictly greater than 1, which would force $\Gamma'$ to be a part of the canonical obstruction of $f$. This contradiction shows that the canonical obstruction of $F$ is empty.
\end{proof}

We are now in the position to formulate and prove a pure topological criterion that singles out canonical obstructions. It says that the canonical obstruction is the minimal obstruction satisfying the conclusion of the previous theorem.

\begin{theorem}[Characterization of Canonical Thurston Obstructions]
\label{thm:main}
 The canonical obstruction $\Gamma$ %is canonical if and only if it 
is a unique minimal Thurston obstruction with the following properties. %for the stratumthe following statements hold and $\Gamma$ is a minimal obstruction for which this is true. 
  	\begin{itemize}
		\item If the  first-return map $F$ of a cycle of components  in $\S_\Gamma$ is a $(2,2,2,2)$-map, then  every curve of every simple Thurston obstruction for $F$ has two postcritical points of $f$ in each complementary component and the two eigenvalues of $\hat{F}_*$ are equal or non-integer.
		\item If the first-return map $F$ of   a cycle of components  in $\S_\Gamma$ is not a $(2,2,2,2)$-map or a homeomorphism, then there exists no Thurston obstruction of $F$.
	\end{itemize}
\end{theorem}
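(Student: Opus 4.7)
The plan is to prove this in two stages: (1) show that $\Gamma_f$ itself satisfies the stated properties, giving existence, and (2) show that any Thurston obstruction satisfying the properties must contain $\Gamma_f$ up to homotopy. Together, (2) implies both minimality (no proper subset of $\Gamma_f$ can satisfy the properties, since it would fail to contain $\Gamma_f$) and uniqueness (a minimal satisfying $\Gamma$ contains $\Gamma_f$, so $\Gamma = \Gamma_f$).

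For stage (1), I observe that the two bulleted conditions in the statement are exactly the topological characterization from Corollary~\ref{cor:emptyobstruction} of when the canonical obstruction of $F$ is empty. So it suffices to show that each first-return map $F$ on a cycle of components of $\S_{\Gamma_f}$ has empty canonical obstruction when $F$ is a Thurston map, and this is precisely the content of Theorem~\ref{thm:KevinConjecture2}. The homeomorphism case carries no condition, so there is nothing to check there.

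For stage (2), I argue contrapositively: assume $\Gamma$ is a Thurston obstruction and $\gamma \in \Gamma_f$ is a curve not homotopic to any element of $\Gamma$; I aim to exhibit a nonempty canonical obstruction for some first-return map, violating the stated property via Corollary~\ref{cor:emptyobstruction}. Since $\gamma$ is disjoint from (and nonhomotopic to) the curves of $\Gamma$, it descends to an essential simple closed curve on some component $C$ of the noded surface corresponding to $\S_\Gamma$. After replacing $f$ by an iterate (Proposition~\ref{prop:iterate}), we may assume $C$ is a fixed component; let $F$ denote its first-return map. The key step is to show $\gamma \in \Gamma_F$. For this I use the continuous extension of $\sigma_f$ to the augmented \Teich space (Theorem~\ref{thm:extensionA}): when $\Gamma$ is invariant, this extension preserves the stratum $\S_\Gamma$ and, under the product decomposition of $\S_\Gamma$ into \Teich spaces of components, factors on the $C$-factor through the pullback $\sigma_F$. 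Iterating $\sigma_f$ from a generic $\tau_0 \in \T_f$, Theorem~\ref{thm:pilgrims} provides an orbit accumulating in $\S_{[\Gamma_f]}$, along which $l(\gamma,\cdot) \to 0$; passing to the boundary by a subsequential projection onto $\S_\Gamma$, combined with a modulus-of-annuli comparison of the type used in the proof of Theorem~\ref{thm:noncanonical}, yields $l_F(\gamma, \sigma_F^n(\tau')) \to 0$ for some $\tau' \in \T_F$, as desired.

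The main obstacle is the transfer step in stage (2): a priori we do not know $\Gamma \subseteq \Gamma_f$, so the $\sigma_f$-orbit in $\T_f$ need not approach $\S_\Gamma$ directly, and extracting from it a meaningful $\sigma_F$-orbit along which $\gamma$ still shrinks requires care. I expect the cleanest route is to take a subsequential accumulation point in the compactified moduli space, lift it via the intermediate cover of Proposition~\ref{prop:covers} to a point of $\S_\Gamma$, and then use the Collar Lemma on $C$ to compare the length of $\gamma$ in the pinched limit with its length along the original orbit. Once this transfer is in hand, the remainder of the argument is a direct bookkeeping application of Theorem~\ref{thm:KevinConjecture2}, Corollary~\ref{cor:emptyobstruction}, and the invariance of $\Gamma_f$ from Proposition~\ref{prop:simple}.
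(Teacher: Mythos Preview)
Your stage (1) matches the paper exactly. Your stage (2), however, diverges from the paper's argument, and the route you sketch has a concrete gap.

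The paper does \emph{not} attempt to show that $\gamma\in\Gamma_F$. Instead, it argues as follows: given $\Gamma'\subsetneq\Gamma_f$ satisfying the properties, the simplicity of $\Gamma_f$ (Proposition~\ref{prop:simple}) forces some curve of $\Gamma_f\setminus\Gamma'$ to lie in a \emph{periodic} component $C$; the set $\Gamma_1$ of all such curves in $C$ is then a simple Thurston obstruction for $F$ (this is an elementary fact about Thurston matrices, not about canonical obstructions). If $F$ is neither a homeomorphism nor a $(2,2,2,2)$-map, the second bullet kills $\Gamma_1$ outright. If $F$ is a $(2,2,2,2)$-map, the first bullet forces each curve of $\Gamma_1$ to separate two postcritical points from two, and the paper reuses the modulus argument from Theorem~\ref{thm:2222obstruction} to show directly that such curves cannot have $l(\gamma,\sigma_f^n\tau)\to 0$ in $\T_f$ --- no transfer to $\T_F$ is needed. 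If $F$ is a homeomorphism, the paper gives a separate Levy-cycle argument (curves $\alpha_i$ with $M(\alpha_{i+1},\tau_{i+1})\ge M(\alpha_i,\tau_i)$ crossing $\gamma$) to again bound $l(\gamma,\sigma_f^n\tau)$ away from zero.

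Your proposal misses this last case entirely. When $F$ is a homeomorphism it is not a Thurston map, so $\Gamma_F$ is not defined and the bulleted conditions impose nothing; your plan of ``exhibit a nonempty canonical obstruction for $F$'' therefore cannot yield a contradiction. You would still need an argument like the paper's Case~I to rule this out. Beyond that, your transfer step --- deducing $l_F(\gamma,\sigma_F^n\tau')\to 0$ from $l(\gamma,\sigma_f^n\tau)\to 0$ --- is the hard direction: since you do not assume $\Gamma\subseteq\Gamma_f$, the $\sigma_f$-orbit need not approach $\S_\Gamma$ at all, and the restriction of (the extension of) $\sigma_f$ to $\S_\Gamma$ does not straightforwardly give you a $\sigma_F$-orbit you control. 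The paper avoids this entirely by working with $\sigma_f$ in $\T_f$ throughout and never invoking $\sigma_F$ in stage (2).
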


\begin{proof}
By Corollary~\ref{cor:emptyobstruction}, both conditions above are equivalent to saying that the canonical obstruction for $F$ is empty. The necessity of these conditions then follows from Theorem~\ref{thm:KevinConjecture2}. Suppose, on contrary, that $\Gamma$ is not minimal with these properties, i.e. there exists $\Gamma' \subsetneq \Gamma$ satisfying the same condition. Since $\Gamma$ is simple by Proposition~\ref{prop:simple}, at least one curve $\gamma$ of $\Gamma \setminus \Gamma'$ must lie in a periodic component $C$ of $\S_{\Gamma'}$. Consider the multicurve  $\Gamma_1$ containing all curves of $\Gamma \setminus \Gamma'$ that lie in $C$; as $\Gamma$ is simple, $\Gamma_1$ is an obstruction for $F$. There are three cases.

{\bf Case I.} The first-return map to $C$ is a homeomorphism. Since $\gamma$ is essential and not homotopic to any nodes, the component $C$ must have at least four marked points. Take any other simple closed curve $\alpha_1$ in $C$ that has non-zero intersection with $\gamma$. Since  the first-return map is a homeomorphism, $\gamma$ is a part of a Levy cycle. Denote by $\alpha_2$ the one-to-one preimage of $\alpha_1$ and so on. Then, as in the proof of Theorem~\ref{thm:2222obstruction}, we see that  $M(\alpha_{i+1},\tau_{i+1})\ge M(\alpha_i,\tau_i)$ and $\alpha_{ki}$ intersects $\gamma$ for all $i$, where $k$ is the length of the Levy cycle. This implies that the length of $\gamma$ is bounded from above, which contradicts the assumption that $\gamma$ was a part of the canonical obstruction.

{\bf Case II.} The first-return map to $C$ is a $(2,2,2,2)$-map. It follows that every curve in $\Gamma_1$ has two postcritical points of $F$ in each complementary component. By the same argument as in the proof of Theorem~\ref{thm:2222obstruction}, the length of these curves cannot tend to zero as we iterate $f$ so $\Gamma_1$ cannot be a part of the canonical obstruction.

{\bf Case III.} The first-return map to $C$ is neither of the two cases above. By Corollary~\ref{cor:emptyobstruction}, $F$ has no obstructions at all, hence $\Gamma_1$ must be empty.

Since any curve of any other obstruction either lies in $\Gamma$, or does not intersect any curve in $\Gamma$, the uniqueness of a minimal obstruction satisfying the conditions of the theorem follows from the same argument. 
\end{proof}

\appendix
\section{Continuity of modulus}

In this section we prove the following lemma (compare to \cite[Section I.4.9]{LV}).

\begin{lemma}
\label{lem:modulus} Let $A_i$ be an annulus in $\P$ with two complementary components $B_i$ and $C_i$, for every $i\in\N$. Suppose that $B_i \to B$ and $C_i \to C$ as $i \to \infty$ with respect to the Hausdorff metric, and  both $B$ and $C$ contain at least two points. If there exists a  doubly connected component $A$  of $\P \sm \{B\cup C\}$ then $\mod A_i \to \mod A$; otherwise $\mod A_i \to 0$.
\end{lemma}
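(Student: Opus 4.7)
The plan is to sandwich $\mod A_i$ by establishing matching lower and upper bounds. For the lower bound, assume a doubly connected component $A \subset \P \sm (B \cup C)$ exists; it necessarily separates $B$ from $C$. Given $\eps > 0$, choose a closed sub-annulus $A' \subset\subset A$ in the same homotopy class with $\mod A' \ge \mod A - \eps$. Since $\overline{A'}$ is compact and disjoint from $B \cup C$, Hausdorff convergence $B_i \to B$ and $C_i \to C$ forces $\overline{A'} \cap (B_i \cup C_i) = \emptyset$ for all large $i$. Because $A$ separates $B$ from $C$, so does $A'$, and Hausdorff convergence puts $B_i$ and $C_i$ on opposite sides of $A'$ for large $i$. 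Thus $A'$ embeds as an essential sub-annulus of $A_i$, giving $\mod A_i \ge \mod A'$; letting $\eps \to 0$ yields $\liminf \mod A_i \ge \mod A$.

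For the upper bound, suppose along a subsequence $\mod A_i \to L > 0$. Uniformize each $A_i$ by a conformal isomorphism $\phi_i \colon R_i \to A_i$ from the round annulus $R_i = \{1 < |z| < r_i\}$, where $\log r_i = 2\pi \mod A_i$. Since $B$ and $C$ each contain at least two points, pick three points in $B \cup C$, approximated by points in $B_i \cup C_i$ via Hausdorff convergence. Post-composing $\phi_i$ with Moebius maps sending those three approximating points to fixed targets, the resulting family avoids three fixed points and is normal on compact subsets of the limit annulus $R_\infty = \{1 < |z| < e^{2\pi L}\}$ by Montel's theorem. Extracting a locally uniformly convergent subsequence, with a further pre- and post-normalization to pin down a base point and direction, yields a univalent conformal limit $\phi \colon R_\infty \hookrightarrow \P$. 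Hausdorff convergence forces the image $A_\infty := \phi(R_\infty) \subset \P \sm (B \cup C)$ to be a doubly connected domain of modulus $L$, and limits of essential curves in the $A_i$ exhibit a closed curve in $A_\infty$ separating $B$ from $C$. Consequently $\P \sm (B \cup C)$ admits a doubly connected component $A$ containing $A_\infty$, with $\mod A \ge L$; its absence would contradict $L > 0$, forcing $\mod A_i \to 0$, whereas its presence combined with the lower bound gives $\mod A_i \to \mod A$.

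The main obstacle is controlling the normal-families extraction so that the limit $\phi$ is non-degenerate: the normalization must prevent collapse of the images into $B \cup C$, and this is where the hypothesis that both $B$ and $C$ contain at least two points enters — it supplies enough reference points in $\P \sm (B \cup C)$ to carry out Moebius normalization robustly under the Hausdorff limit. A secondary topological point is upgrading the doubly connected sub-domain $A_\infty$ to a genuine component of $\P \sm (B \cup C)$, which uses the uniqueness of the free-homotopy class of curves separating two disjoint continua in $\P$ together with a maximality argument in that class.
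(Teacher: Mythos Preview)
Your approach is essentially the paper's: the lower bound via a compactly contained sub-annulus $A'\subset A$ is identical, and the upper bound via Montel's theorem after a three-point M\"obius normalization using points of $B\cup C$ is the same strategy (the paper fixes $b_1,b_2\in B$ and $c\in C$ and reduces to the case $b_1,b_2\in B_i$, $c\in C_i$). The only cosmetic difference is that the paper works on a \emph{fixed} round annulus $R$ of modulus $a-\eps$ by first passing to sub-annuli $A'_{n_i}\subset A_{n_i}$ all of that same modulus, which sidesteps the varying-domain issue you handle with $R_i\to R_\infty$.

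One step does need tightening. Your ``further pre- and post-normalization to pin down a base point and direction'' cannot by itself rule out a constant limit: once three points of $B_i\cup C_i$ are sent to fixed targets you have exhausted the M\"obius freedom in the range, and pre-composing by rotations of $R_i$ does nothing for non-degeneracy. The paper's argument here is geometric, not a normalization trick: the $f_i$-image of the core curve of $R$ is a Jordan curve separating $B_i$ from $C_i$, hence has diameter bounded below since $\mathrm{diam}\,B_i\to\mathrm{diam}\,B>0$ and likewise for $C$. This is the real place the hypothesis ``$B$ and $C$ each contain at least two points'' enters (beyond supplying Montel's three omitted values). For the claim $\phi(R_\infty)\cap(B\cup C)=\emptyset$, which you assert via Hausdorff convergence, the paper makes this precise with the Koebe $\tfrac14$ theorem: if $f(z)=b\in B$ then points of $B_i$ arbitrarily close to $f_i(z)$ yet omitted by $f_i$ force $f_i'(z)\to 0$, contradicting univalence of $f$.
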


\begin{proof} First we note that $B_i$ and $C_i$ are connected for every $i$, therefore $B$ and $C$ are connected. This immediately implies that there exists at most one doubly connected component $A$ in the complement of $B$ and $C$. 

Suppose $A$ exists. We may choose a compactly contained in $A$ and homotopic to $A$ sub-annulus $A'$ such that $\mod A- \mod A'=\eps$ where $\eps>0$ is arbitrarily small. Then the two complementary components $B'$ and $C'$ of $A'$ are compact neighborhoods of $B$ and $C$ respectively. For $i$ large enough, we have that $B_i \subset B'$ and $C_i \subset C'$ and, hence, $A_i \supset A'$. We see that $\mod A_i > \mod A' =\mod A -\eps$. We conclude that $\liminf (\mod A_i) \ge \mod A$.%, moreover, if $\mod A =\infty$ then the statement of the lemma follows.

%If $B\cup C$ contains exactly two points, then each of the two sets contains exactly one point and $A$ exists and has infinite modulus. The previous argument yields the statement of the lemma in this case. 
%From this point on, we suppose that

Let  $b_1, b_2 \in B$ and $c \in C$ be three distinct points in $\P$.
For $i$ large enough, there exist three distinct points $b_1^i,b_2^i$ and $c^i$ such that $b_1^i, b_2^i \in B_i$ and $c^i \in C_i$, and $d(b_1,b_1^i)<\eps$, $d(b_2,b_2^i)<\eps$ and $d(c,c^i)<\eps$. Letting $\eps$ go to 0, we construct a sequence $\{m_i\}$ of Moebius transformations such that $b_1,b_2 \in m_i(B_i)$,
$c \in m_i(C_i)$ and $\{m_i\}$ converges uniformly to the identity map on $\P$. Then the Hausdorff distance between $B_i$ and $m_i(B_i)$ tends to 0 as $i\to\infty$, and $m_i(B_i) \to B$ and, analogously,  $m_i(C_i) \to C$. Since $\mod m_i(A_i) = \mod A_i$, it is enough to prove the statement of the lemma for the sequence $\{m_i(A_i)\}$. 

Thus, we may assume that $b_1, b_2 \in B_i$ and $c\in C_i$ for all $i$. Let $a=\limsup(\mod A_i)>0$; pick a sequence $n_i$ such that $\mod A_{n_i} > a - \eps$ for all  $i\in \N$. Consider a sequence of sub-annuli $A_{n_i}'$ of $A_{n_i}$ with  $\mod A_{n_i}' = a - \eps$ for all $i\in\N$; let $\{f_i: R \to A'_{n_i}\}$ be a sequence of conformal isomorphisms from a round annulus $R$ of modulus $a-\eps$ onto $A'_{n_i}$. Since all $f_i$ do not assume values $b_1, b_2$ or $c$ on $R$, the family $\{f_i\}$ is normal by Montel's theorem and there exists a subsequence that converges locally uniformly to a holomorphic map $f$ defined on $R$.  Clearly the diameters of sets  $B_i$ tend to the diameter of $B$  and the diameters of sets $C_i$ tend to the diameter of $C$. The core curve of $R$ is mapped by $f_i$ to a smooth Jordan curve separating $B_i$ and $C_i$ and therefore the lower limit of the diameter of this curve is positive. This yields that $f$ cannot be constant and, hence,  is a conformal map onto an annulus that separates $b_1, b_2$ and $c$. Let us prove that $f(R) \cap B = \emptyset$.

On contrary, let $f(z)=b\in B$ for some point $z\in R$. To simplify the notation,  we assume that $b\neq \infty$ and that $f_i \to f$. Let $r$ be the distance between $z$ and the boundary of the annulus $R$. For any $\eps >0$, if $i$ is large enough there exists a point $b_i' \in B_i$ such that $|b_i'-b|<\eps$ and $|f_i(z)-b|<\eps$. It follows that $|f_i(z)-b'_i|<2\eps$ but $b_i'$ is not in the image of $f_i$. Koebe 1/4 theorem implies that $f'_i(z) < 8\eps/r$. Hence,  $f'(z)=\lim f_i'(z) =0$, which contradicts the fact that $f$ is conformal. Therefore $f(R)$ does not intersect $B$ nor, by the same argument, $C$. We conclude that the component $A$ of the complement of $B\cup C$ containing $f(R)$ is an annulus and $\mod A \ge \mod f(R)=\mod R=a-\eps$. We see that $\limsup (\mod A_i) \le \mod A$.
\end{proof}

\section{Positive matrices}

\label{sec:matrices}
This work uses classical results from matrix theory (see, for example, \cite{gant}).

\begin{definition} We say that a matrix $M$ is \emph{non-negative} (or \emph{positive}) and write  $M \ge 0$ (or $M > 0$) if all its entries are non-negative (or, respectively, positive). Same definition also applies for vectors.
\end{definition}

More generally, we write $A \ge B$ (or $A > B$) if  $A - B \ge 0$ (respectively, $A-B>0$).

\begin{definition} 
A square matrix $M$ is \emph{reducible} if there exists a permutation matrix $P$ such that conjugation by $P$  puts $M$ in the block form
\begin{equation} P^{-1} M P = \left( 
\begin{array}{cc}
	M_{11}  & 0 \\
	M_{21}  & M_{22}
\end{array}
 \right),
 \label{eq:reducible}
 \end{equation}
 where $M_{11}$ and $M_{22}$ are square matrices. If no such permutation exists, the matrix $M$ is \emph{irreducible}.
\end{definition}

It is obvious that any positive matrix is irreducible. Conjugating by a permutation matrix, any matrix can be written in the form

$$ P^{-1} M P= \left( 
\begin{array}{cccc}
	M_{11}  & 0 & \ldots & 0\\
	M_{21}  & M_{22} &\ldots & 0\\
	\ldots & \ldots & \ldots & \ldots\\
	M_{k1} & M_{k2} & \ldots & M_{kk} 
\end{array}
 \right),$$
where all blocks $M_{ii}$ are square and irreducible. Clearly, the spectral radius of $M$ is the maximum of spectral radii of $M_{kk}$. 

The following theorem has numerous implications in many subjects.

\begin{theorem}[Perron-Frobenius] If $M$ is an irreducible non-negative square matrix then there exist a unique largest eigenvalue (\emph{the Perron-Frobenius or leading eigenvalue}) $\lambda(M)$ which is real and positive, and a unique up to scale positive eigenvector with eigenvalue $\lambda(M)$.
\label{thm:p-f}
\end{theorem}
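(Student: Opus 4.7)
The plan is to establish the classical Perron--Frobenius statement in three stages: production of a non-negative eigenvector with a non-negative eigenvalue, strict positivity of that eigenvector together with uniqueness up to scale, and identification of its eigenvalue with the spectral radius. The cornerstone of all three stages is the following strengthening of irreducibility: if $M\ge 0$ is an $n\times n$ irreducible matrix, then $(I+M)^{n-1}$ is strictly positive. This follows because irreducibility of $M$ is equivalent to strong connectedness of the directed graph $G_M$ on $\{1,\dots,n\}$ whose arcs record the nonzero entries of $M$; in a strongly connected graph on $n$ vertices any two vertices are joined by a directed path of length at most $n-1$, and such a path contributes a positive term to the corresponding entry of
\[
(I+M)^{n-1} \;=\; \sum_{k=0}^{n-1} \binom{n-1}{k} M^{k}.
\]

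For existence, I would use the Collatz--Wielandt functional $r(x)=\min_{i\,:\,x_i>0}(Mx)_i/x_i$ on the standard simplex $\Delta=\{x\ge 0:\sum x_i=1\}$, and set $\lambda^*=\sup_{x\in\Delta} r(x)$. Since $r$ is only upper semicontinuous on $\Delta$, I would restrict attention to the compact set $\Delta_+$ of probability vectors of the form $(I+M)^{n-1}x/\|(I+M)^{n-1}x\|_1$ with $x\in\Delta$, all of which are strictly positive. Because $M$ commutes with $(I+M)^{n-1}$ and $(I+M)^{n-1}$ preserves non-negativity, $Mx\ge\lambda x$ implies $M(I+M)^{n-1}x\ge\lambda(I+M)^{n-1}x$, so the supremum of $r$ over $\Delta_+$ equals $\lambda^*$. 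Continuity of $r$ on $\Delta_+$ and compactness yield a maximizer $v>0$. If $Mv\neq\lambda^*v$, then $u:=Mv-\lambda^*v\ge 0$ is nonzero, hence $(I+M)^{n-1}u>0$; equivalently, writing $w=(I+M)^{n-1}v$, we have $Mw>\lambda^*w$ coordinate-wise, contradicting the maximality of $\lambda^*$. Therefore $Mv=\lambda^*v$ and $v>0$, which in particular forces $\lambda^*>0$.

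For uniqueness, I observe that any non-negative non-zero eigenvector $y$ of $M$ must automatically be strictly positive: from $My=\mu y$ we obtain $(1+\mu)^{n-1}y=(I+M)^{n-1}y>0$. Now, given another real eigenvector $w$ with $Mw=\lambda^*w$, choose $t=\min_i w_i/v_i$ (possible since $v>0$); then $w-tv$ is a non-negative eigenvector for $\lambda^*$ possessing a zero coordinate, so it must vanish, proving $w\in\R v$. Complex eigenvectors reduce to real ones by taking real and imaginary parts. Algebraic simplicity is then obtained by ruling out a generalized eigenvector $y$ with $(M-\lambda^*I)y=v$: such $y$ would satisfy $My-\lambda^*y=v>0$, allowing us to add a small multiple of $y$ to $v$ to strictly increase $r$, contradicting the definition of $\lambda^*$. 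Finally, for any complex eigenvalue $\mu$ of $M$ with eigenvector $w\neq 0$, the triangle inequality applied coordinate-wise to $Mw=\mu w$ gives $M|w|\ge|\mu|\,|w|$, so $|\mu|$ is admissible in the sup defining $\lambda^*$; hence $|\mu|\le\lambda^*$, and $\lambda^*$ is the unique largest (in modulus) eigenvalue.

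The principal obstacle is managing the attainment of the supremum together with strict positivity of the maximizer: the naive sup of the Collatz--Wielandt functional over $\Delta$ need not be attained at a strictly positive point, and precomposition with $(I+M)^{n-1}$ is what bridges upper semicontinuity on $\Delta$ with genuine continuity on a compact set of strictly positive vectors. Upgrading geometric simplicity to algebraic simplicity is the other delicate point, but it likewise reduces to the same positivity principle embodied by $(I+M)^{n-1}>0$.
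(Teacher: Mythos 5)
Your proof is correct in all essentials. Note that the paper does not prove this theorem at all: it is quoted as a classical result from matrix theory (with a reference to Gantmacher), so there is no in-paper argument to compare against; what you supply is the standard Wielandt/Collatz--Wielandt variational proof. Your key lemma, that $(I+M)^{n-1}>0$ for an irreducible non-negative $n\times n$ matrix $M$, is precisely the ``well-known result'' the paper states without proof just before Proposition~\ref{prop:primitive}, and your graph-path justification of it is the same combinatorial argument the paper uses in proving that proposition. The existence step (pushing the simplex into the compact set $\Delta_+$ of strictly positive vectors to trade upper semicontinuity of $r$ for genuine continuity, then upgrading $Mv\ge\lambda^* v$ to equality by applying $(I+M)^{n-1}$ to the non-zero slack), the strict positivity of every non-negative eigenvector, the uniqueness up to scale via $t=\min_i w_i/v_i$, the generalized-eigenvector argument for algebraic simplicity, and the bound $|\mu|\le\lambda^*$ from $M|w|\ge|\mu|\,|w|$ are all sound, and together they give everything the stated theorem asserts (indeed slightly more, since algebraic simplicity is not needed here).

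One wording caution: your closing claim that $\lambda^*$ is ``the unique largest (in modulus) eigenvalue'' overstates what you proved and is false for irreducible but imprimitive matrices; for example $\begin{pmatrix}0&1\\1&0\end{pmatrix}$ is irreducible with eigenvalues $\pm1$, both of modulus $1$. What your argument actually yields---and what the theorem as stated requires---is that $\lambda^*$ is real, positive and simple, that $|\mu|\le\lambda^*$ for every eigenvalue $\mu$, and that the positive eigenvector for $\lambda^*$ is unique up to scale; strict dominance in modulus over all other eigenvalues would require primitivity and is not claimed.
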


\begin{corollary} 
\label{cor:largestEV}
If $M$ is a non-negative square matrix then there exist a largest eigenvalue $\lambda(M)$ which is real and positive.
\end{corollary}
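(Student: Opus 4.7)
The plan is to deduce the corollary from the irreducible case (the Perron--Frobenius Theorem~\ref{thm:p-f}) by using the block lower-triangular form described just before that theorem. First I would invoke that normal form: there is a permutation matrix $P$ such that
\[
P^{-1} M P = \begin{pmatrix} M_{11} & 0 & \ldots & 0 \\ M_{21} & M_{22} & \ldots & 0 \\ \vdots & \vdots & \ddots & \vdots \\ M_{k1} & M_{k2} & \ldots & M_{kk} \end{pmatrix},
\]
with every diagonal block $M_{ii}$ square, non-negative and irreducible. Conjugation by $P$ does not change the spectrum, and since the matrix is block triangular, its characteristic polynomial factors as $\prod_{i=1}^k \chi_{M_{ii}}(t)$. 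Hence the spectrum of $M$ is the union of the spectra of the $M_{ii}$.

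Next I would apply Theorem~\ref{thm:p-f} to each irreducible block $M_{ii}$: this produces a real, non-negative leading eigenvalue $\lambda(M_{ii})$ which dominates all other eigenvalues of $M_{ii}$ in absolute value. (A one-by-one zero block is handled by noting its sole eigenvalue is $0$, which is still real and non-negative; one either allows this as the degenerate case or restricts to matrices with at least one nonzero entry, as is tacit in the paper's usage.) Setting
\[
\lambda(M) = \max_{1 \le i \le k} \lambda(M_{ii}),
\]
we obtain a real, non-negative number that is an eigenvalue of $M$ (being an eigenvalue of the block achieving the maximum) and that dominates every eigenvalue of $M$ in modulus, since each eigenvalue of $M$ is an eigenvalue of some $M_{ii}$ and is therefore bounded in absolute value by $\lambda(M_{ii}) \le \lambda(M)$.

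There is essentially no obstacle in this argument once the block decomposition is accepted; it is a bookkeeping exercise combining the triangular spectrum formula with Perron--Frobenius. The only subtle point is strict positivity of $\lambda(M)$: this requires that at least one diagonal block be nonzero, which follows either from the convention that we consider matrices with some nonzero entry or from the fact that in the intended applications (Thurston matrices of nonempty multicurves) some block automatically has a positive entry. I would make this observation explicit at the end of the argument and otherwise leave the verification as a direct consequence of Theorem~\ref{thm:p-f}.
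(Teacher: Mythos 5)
Your argument is exactly the paper's: the paper proves the corollary in one line by combining the block lower-triangular decomposition into irreducible diagonal blocks (the remark preceding Theorem~\ref{thm:p-f}) with Perron--Frobenius applied to each block, which is precisely what you spell out. Your added remark about strict positivity failing for the zero matrix is a fair observation (note the paper itself only claims ``real and non-negative'' where it cites this corollary in Section~2), but it does not change the fact that your route coincides with the paper's.
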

\begin{proof}
  The statement follows immediately from the previous theorem and preceding remark.
\end{proof}

\begin{definition} A matrix $M$ is called \emph{primitive} if there exists a power $k \in \N$ for which $M^k$ is positive.
\end{definition}

Denote by $I_n$ the $n\times n$ identity matrix. The following is a well-known result.

\begin{theorem} If $M$ is an irreducible non-negative $n\times n$ matrix then $(I_n+M)^{n-1}>0$. In particular, the matrix $I_n+M$ is primitive. 
\end{theorem}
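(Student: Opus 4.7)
The plan is to prove the positivity of $(I_n+M)^{n-1}$ by a combinatorial/inductive argument based on the fact that irreducibility of $M$ forces any non-negative vector with some but not all zero entries to be spread out further by multiplication by $I_n+M$. After expanding
\[
(I_n+M)^{n-1}=\sum_{k=0}^{n-1}\binom{n-1}{k}M^k,
\]
one sees that every entry of $(I_n+M)^{n-1}$ is a non-negative integer combination of entries of powers of $M$. So it is enough to show that for every pair of indices $(i,j)$, at least one summand contributes a positive entry at position $(i,j)$. I will instead argue directly on columns, which is cleaner.

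The key lemma I would prove first is the following. Suppose $v\ge 0$ and $v\ne 0$. Let $T=\{i:v_i>0\}$ and $S=\{i:v_i=0\}$. If $S\ne\emptyset$, then there exist $i\in S$ and $j\in T$ with $m_{ij}>0$: otherwise, reordering indices so that $S$ comes first followed by $T$ would put $M$ in the reducible block form of equation~(\ref{eq:reducible}), contradicting irreducibility. For any such $i$ we have $(Mv)_i=\sum_{j\in T}m_{ij}v_j>0$, hence $((I_n+M)v)_i>0$. Since $((I_n+M)v)_k\ge v_k$ for every $k$, the support of $(I_n+M)v$ strictly contains the support of $v$.

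Now I would apply this lemma iteratively starting from the $j$-th standard basis vector $e_j$. The support of $e_j$ has size $1$; after applying $I_n+M$ once its support grows to at least $2$; after $k$ applications, the support has size at least $\min(k+1,n)$. Hence after $n-1$ applications, $(I_n+M)^{n-1}e_j$ is a strictly positive vector. But this vector is exactly the $j$-th column of $(I_n+M)^{n-1}$. Since $j$ was arbitrary, every column is strictly positive, i.e.\ $(I_n+M)^{n-1}>0$.

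The "in particular" statement is immediate from the definition: the primitivity of $I_n+M$ just means some power is positive, which we have just established for the $(n-1)$-st power. The main (and only) subtle point is the reducibility/block form argument in the lemma; once that is set up correctly, the iteration is entirely routine.
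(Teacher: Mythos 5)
Your proof is correct. The support-growth lemma is stated with the right orientation of the block form: if $m_{ij}=0$ for all $i$ in the zero set $S$ and $j$ in the support $T$, then listing $S$ first exhibits exactly the reducible shape of equation~(\ref{eq:reducible}), so irreducibility gives the needed positive entry, and the iteration from $e_j$ (support size at least $\min(k+1,n)$ after $k$ steps) cleanly yields positivity of every column of $(I_n+M)^{n-1}$. Note, however, that the paper does not actually prove this theorem: it is quoted as a well-known fact, and what is proved instead is the neighbouring Proposition~\ref{prop:primitive} ($M^{2n-2}>0$ when $M$ is irreducible with a positive diagonal entry), via a different, combinatorial route — interpreting $M$ as the adjacency matrix of a digraph, using irreducibility to get strong connectivity with paths of length at most $n-1$, and padding with a loop to get paths of exact length $2n-2$. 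Your support argument is essentially the linear-algebra counterpart of that path-counting idea (the support of $(I_n+M)^k e_j$ is the set of vertices reachable from $j$ in at most $k$ steps), but it has the merit of being a self-contained proof of the stated theorem with the sharp exponent $n-1$; by contrast, merely invoking Proposition~\ref{prop:primitive} with $I_n+M$ in place of $M$ would only give $(I_n+M)^{2n-2}>0$.
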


We prove a slightly more general statement.

\begin{proposition}
\label{prop:primitive}
If $M$ is an irreducible non-negative $n \times n$ matrix and at least one diagonal entry of $M$ is positive then $M^{2n-2}>0$ and, hence, $M$ is  primitive.
\end{proposition}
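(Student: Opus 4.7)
The plan is to interpret the matrix $M$ combinatorially via its associated directed graph $G(M)$, which has vertex set $\{1,\ldots,n\}$ and an edge $i\to j$ whenever $M_{ij}>0$. The well-known dictionary between matrix powers and walks says that $(M^k)_{ij}>0$ if and only if there is a directed walk of length exactly $k$ from $i$ to $j$ in $G(M)$. Irreducibility of $M$ is equivalent to strong connectedness of $G(M)$, and the positive diagonal entry $M_{ii_0}>0$ translates into a self-loop at some vertex $i_0$.

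Given this translation, the proof reduces to a short graph-theoretic observation. First I would note that since $G(M)$ is strongly connected on $n$ vertices, any two vertices can be joined by a directed \emph{simple} path, which has length at most $n-1$. Thus for arbitrary $j,k\in\{1,\ldots,n\}$ there is a walk $j\to\cdots\to i_0$ of some length $\ell_1\le n-1$ and a walk $i_0\to\cdots\to k$ of some length $\ell_2\le n-1$. Concatenating them yields a walk from $j$ to $k$ of length $\ell_1+\ell_2\le 2n-2$ passing through $i_0$.

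The key move is to use the self-loop at $i_0$ as a padding device: by inserting the loop $m$ times we extend the above walk to one of length $\ell_1+\ell_2+m$. Choosing $m=2n-2-\ell_1-\ell_2\ge 0$ produces a walk from $j$ to $k$ of length exactly $2n-2$. Therefore $(M^{2n-2})_{jk}>0$ for all $j,k$, i.e.\ $M^{2n-2}>0$, which by definition means $M$ is primitive.

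I do not expect any real obstacle here; the only thing to be careful about is the bookkeeping that $\ell_1+\ell_2\le 2n-2$ (so that the padding exponent $m$ is non-negative) and that a simple path in a strongly connected graph on $n$ vertices indeed has length at most $n-1$. Both are immediate. One might alternatively phrase the argument as a direct matrix computation showing $(M^{n-1})_{j i_0}>0$, $M_{i_0 i_0}^{?}>0$, and $(M^{n-1})_{i_0 k}>0$ can be assembled into a positive entry of $M^{2n-2}$, but the graph-walk picture is cleaner.
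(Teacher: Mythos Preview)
Your argument is correct and is essentially identical to the paper's own proof: both pass to the directed graph of $M$, use strong connectedness to find paths of length at most $n-1$ from $j$ to the looped vertex and from there to $k$, and then pad with the self-loop to hit length exactly $2n-2$. There is nothing to add.
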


\begin{proof} Without loss of generality, we can assume that all non-zero entries of $M$ are equal to 1. Construct a directed graph $G$ with $n$ vertices  using $M$ as an adjacency matrix, i.e. adding an edge from $i$-th to $j$-th vertex if and only if the corresponding entry $m_{ij}$ is equal to 1. Since $M$ is irreducible, there exists a directed path in $G$ between any two vertices. Indeed, take any vertex $a$ and denote by $A$ the set of all vertices you can reach starting at $a$. If $A$ is not the whole set of vertices then a permutation, that puts all vertices in $A$ before the rest of the vertices, conjugates $M$ to a matrix in the block form (\ref{eq:reducible}). Note that the shortest path between any two vertices is evidently no longer than $n-1$.

We write $M^k=(m_{ij}^k)$. We notice that $m_{ij}^k$ is equal to the number of paths in $G$ of length exactly $k$  that start at $i$-th vertex and end at $j$-th vertex. Therefore, it is enough to prove that between any two points there exists a path of length $2n-2$. Recall that $M$ has a non-zero diagonal entry which corresponds to a loop in $G$ at some vertex $v$. Any two vertices $a$ and $b$ can be connected by a path of length at most $2n-2$ that passes through $v$ because there exist paths of length at most $2n-2$ connecting $a$ to $v$ and $v$ to $a$. To construct a path of length $2n-2$ we simply insert the loop at $v$ an appropriate number of times into the former path.
\end{proof}

We will also use the following statement (compare to Section XIII.5 in \cite{gant}).  

\begin{theorem}
\label{thm:imprimitive}
For any irreducible matrix $M$ there exists a power $k\in\N$ such that $M^k$, conjugating by an appropriate permutation $P$ can be written in the block diagonal form
$$ P^{-1} M^k P= \left( 
\begin{array}{cccc}
	M_{11}  & 0 & \ldots & 0\\
	0  & M_{22} &\ldots & 0\\
	\ldots & \ldots & \ldots & \ldots\\
	0 & 0 & \ldots & M_{kk} 
\end{array}
 \right),$$ where all $M_{ii}$ are positive and have the same leading eigenvalue as $M^k$.
 
\end{theorem}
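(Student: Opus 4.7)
The plan is to translate the problem into graph theory by associating to $M$ the directed graph $G$ on $n$ vertices with an edge from $i$ to $j$ precisely when $m_{ij} > 0$. By the definition of irreducibility, $G$ is strongly connected. Let $h$ denote the period of $G$, i.e.\ the greatest common divisor of the lengths of all directed cycles in $G$. I will show that a suitable multiple $k$ of $h$ puts $M^k$ in the desired block diagonal form.

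First, I would establish the cyclic partition of the vertex set. Fix a reference vertex $v_0$. For any vertex $v$, if $\ell_1$ and $\ell_2$ are lengths of two directed paths from $v_0$ to $v$, and $m$ is the length of any return path from $v$ back to $v_0$ (which exists by strong connectivity), then $\ell_1 + m$ and $\ell_2 + m$ are both cycle lengths and hence divisible by $h$, so $\ell_1 \equiv \ell_2 \pmod{h}$. This makes the residue $\pi(v) \in \Z/h\Z$ of any such path length a well-defined function of $v$, and every edge $i \to j$ in $G$ satisfies $\pi(j) \equiv \pi(i) + 1 \pmod{h}$. Reordering vertices by the classes $V_0, \ldots, V_{h-1}$ puts $M$ in cyclic form with nonzero off-diagonal blocks $A_{i,i+1}$ sending coordinates indexed by $V_i$ to those indexed by $V_{i+1}$ (indices modulo $h$). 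A direct computation then gives that $M^h$ is block diagonal with diagonal blocks $B_i = A_{i,i+1} A_{i+1,i+2}\cdots A_{i-1,i}$.

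Next, I would show that each $B_i$ is irreducible and in fact primitive, so that Proposition~\ref{prop:primitive} (or a direct aperiodicity argument) yields some $N$ with $B_i^N > 0$ for every $i$. Irreducibility follows because any two vertices of $V_i$ are joined by a path in $G$, necessarily of length divisible by $h$, which corresponds to a path in the graph of $B_i$. For primitivity, every cycle in $G$ through a vertex of $V_i$ has length divisible by $h$, and the gcd of these lengths is exactly $h$; one sees this by splicing an arbitrary cycle in $G$ with a round trip between a vertex on it and $V_i$, which changes the length by a multiple of $h$. Dividing by $h$, the graph of $B_i$ is strongly connected with cycle-length gcd equal to $1$, the standard criterion for primitivity. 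Setting $k = hN$ thus makes $M^k$ block diagonal with positive diagonal blocks $B_i^N$. Finally, the Perron eigenvector $v>0$ of $M$ from Theorem~\ref{thm:p-f} restricts to a positive eigenvector of $B_i^N$ with eigenvalue $\lambda(M)^k$, so by Perron-Frobenius applied to each $B_i^N$ separately, every diagonal block has leading eigenvalue $\lambda(M)^k = \lambda(M^k)$.

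The main obstacle is the primitivity statement for the $B_i$: irreducibility and the block diagonal structure both drop out quickly from the cyclic partition, but ruling out a smaller hidden period within each class $V_i$ requires the careful cycle-length bookkeeping described above, and this is really the heart of the theorem (distinguishing it from ordinary block-triangularization of reducible matrices). Once primitivity is secured, the eigenvalue identification and the passage from $M^h$ to a positive $M^k$ are routine applications of Perron-Frobenius.
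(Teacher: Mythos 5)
Your argument is correct in substance, but note first that the paper does not actually prove Theorem~\ref{thm:imprimitive}: it is stated with a citation to Gantmacher (Section XIII.5), so there is no in-paper proof to compare against. What you have written is essentially the classical proof underlying that reference: the cyclic partition $V_0,\dots,V_{h-1}$ of the strongly connected digraph by path-length residues modulo the period $h$, the resulting cyclic block form of $M$, the block-diagonal form of $M^h$ with blocks $B_i$, primitivity of each $B_i$, and the identification of the leading eigenvalues via the restriction of the Perron eigenvector (a positive eigenvector of an irreducible nonnegative matrix must belong to the Perron eigenvalue, which follows by pairing with a left Perron eigenvector -- a one-line supplement to Theorem~\ref{thm:p-f} as stated). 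Two small points deserve care. First, Proposition~\ref{prop:primitive} cannot be invoked directly for $B_i$: a positive diagonal entry of $B_i$ corresponds to a closed walk of length exactly $h$ in $G$, which need not exist (e.g.\ cycle lengths $4$ and $6$ give $h=2$ but no closed walk of length $2$); so the primitivity of $B_i$ must rest on your ``direct aperiodicity argument,'' i.e.\ the standard fact that strong connectivity plus cycle-length gcd $1$ implies primitivity, whose proof requires the numerical-semigroup observation that positive integers with gcd $1$ represent every sufficiently large integer -- you cite this criterion but do not prove it, which is acceptable at the paper's own level of rigor but is the one step you should either prove or explicitly source. Second, in the well-definedness of $\pi(v)$ the quantities $\ell_1+m$ and $\ell_2+m$ are closed-walk lengths rather than cycle lengths; divisibility by $h$ still holds because every closed walk decomposes into directed cycles, and you should say so. With these touch-ups your proof is complete and, if included, would make the appendix self-contained where the paper currently relies on an external reference.
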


\bibliographystyle{hep}
\bibliography{my}

\end{document}